\newtheorem{assumption}{Assumption}%
\theoremstyle{thmstyleone}%
\newtheorem{theorem}{Theorem}
\newtheorem{proposition}[theorem]{Proposition}%
\theoremstyle{thmstyletwo}%
\theoremstyle{thmstylethree}%
\begin{document}

\title[Article Title]{Dual Newton Proximal Point Algorithm for Solution Paths of the $\ell_1$-Regularized Logistic Regression}


\author[1]{\fnm{Yong-Jin} \sur{Liu}}\email{yjliu@fzu.edu.cn}
\equalcont{These authors contributed equally to this work.}

\author*[2]{\fnm{Weimi} \sur{Zhou}}\email{wmzhou1997@163.com}
\equalcont{These authors contributed equally to this work.}

\affil[1]{\orgdiv{Center for Applied Mathematics of Fujian Province, School of Mathematics and Statistics}, \orgname{Fuzhou University}, \orgaddress{\street{No. 2 Wulongjiang North Avenue}, \city{Fuzhou}, \postcode{350108}, \state{Fujian}, \country{China}}}

\affil*[2]{\orgdiv{School of Mathematics and Statistics}, \orgname{Fuzhou University}, \orgaddress{\street{No. 2 Wulongjiang North Avenue}, \city{Fuzhou}, \postcode{350108}, \state{Fujian}, \country{China}}}


\abstract{The $\ell_1$-regularized logistic regression  is a widely used statistical model in data classification. This paper proposes a dual Newton method based proximal point algorithm (PPDNA) to solve the $\ell_1$-regularized logistic regression problem with bias term. The global and local convergence of PPDNA hold under mild conditions. The computational cost of a semismooth Newton ({\sc Ssn}) algoithm  for solving subproblems in the PPDNA can be effectively reduced by fully exploiting the second-order sparsity of the problem. We also design an adaptive sieving (AS) strategy to generate solution paths for the $\ell_1$-regularized logistic regression problem, where each subproblem in the AS strategy is solved by the PPDNA. This strategy exploits active set constraints to reduce the number of variables in the problem, thereby speeding up the PPDNA for solving a series of problems. Numerical experiments demonstrate the superior performance of the PPDNA in comparison with some state-of-the-art second-order algorithms and the efficiency of the AS strategy combined with the PPDNA for generating solution paths.}

\keywords{$\ell_1$-regularized logistic regression, proximal point algorithm, semismooth Newton method,  adaptive sieving strategy, solution path}



\maketitle

\section{Introduction}\label{intro}
High-dimensional logistic regression model is a classification model commonly used in machine learning. In logistic regression, given pairs of training samples $({\bm{a}}_1,{\bm{b_1}}),\ldots,({\bm{a}}_m,{{{\bm{b}}_m}})\in\mathbb{R}^n\times\{-1,1\}$,  the conditional probability distribution of label ${\bm{b}}_i$ given  a vector ${\bm{a}}_i$ is defined by 
$$
p_{\log}(v,{\bm{w}})_i=p({\bm{b}}_i|{\bm{a_i}})=\frac{1}{1+\exp(-{\bm{b}}_i({\bm{a}}_i^{\top}{\bm{w}}+v))},\ \forall i=1,\ldots,m,
$$ 
where the weight vector ${\bm{w}}\in \mathbb{R}^n$ and the intercept $v\in\mathbb{R}$ are unknown parameters. The corresponding maximum log-likelihood function is as follows:
\begin{equation*}
	\begin{split}
		\max_{{\bm{w}}\in\mathbb{R}^n,v\in\mathbb{R}}\log(\prod_{i=1}^{m} p_{\log}(v,{\bm{w}})_i)&=\max_{{\bm{w}}\in\mathbb{R}^n,v\in\mathbb{R}}\sum_{i=1}^{m}\log p_{\log}(v,{\bm{w}})_i\\
		&=-\min_{{\bm{w}}\in\mathbb{R}^n,v\in\mathbb{R}}\sum_{i=1}^{m}\log({1+\exp(-{\bm{b}}_i({\bm{a}}_i^{\top}{\bm{w}}+v))}),
	\end{split}
\end{equation*}
where $ \sum_{i=1}^{m}\log({1+\exp(-{\bm{b}}_i({\bm{w}}^{\top}{\bm{a}}_i+v))})$ is called the logistic loss. 
The problem of minimizing the average logistic loss is called the logistic regression problem.  Logistic regression model is a supervised statistical learning method that can be used for classification, prediction, and so on. In classification, it can not only achieve binary classification, but also be extended to multi-class classification problems \cite{H.L.S.2013,C.X.G.2018}. In the context of the era of big data, it is challenging to solve logistic regression model when the number $n$ of features is much larger than the number $m$ of samples. In order to avoid overfitting, it is usually necessary to select some main features and exclude some irrelevant variables. The $\ell_1$ regularization is widely used in machine learning, which automatically makes the model filter features \cite{N.2004,L.L.A.2006}.  
In this paper, we consider the $\ell_1$-regularized logistic regression problem with a bias term as follows:
\begin{equation}\label{1.1}
	\mathop {\min }\limits_{{\bm{w}}\in \mathbb{R}^n,v\in\mathbb{R}} \frac{1}{m}\sum_{i=1}^{m}\log(1+\exp(-{\bm{b}}_i({\bm{a}}_i^{\top}{\bm{w}}+v)))+\lambda\|{\bm{w}}\|_1, 
\end{equation}
where $\lambda>0$ is a given regularization parameter. The logistic regression with $\ell_1$ regularization  has many applications in statistical learning such as high-dimensional gene selection \cite{BDK.2020},  cancer classification \cite{A.R.B.2021}, and  graphical model selection \cite{W.L.R.2006}.

The $\ell_1$-regularized logistic regression problem is a widely used machine learning model. However, it is difficult to solve it because the $\ell_1$ regularization is not differentiable.  To address this problem, we shall briefly review existing methods and recent sieving strategies for this problem. For the $\ell_1$-regularized  logistic regression problem without bias term, many efficient optimization methods have been proposed. Lee et al. \cite{L.L.A.2006} reformulated the quadratic approximation of the $\ell_1$-regularized  logistic regression problem as an $\ell_1$ constrained least squares problem by an iteratively reweighted least squares formulation and then applied least angle regression (LARS) to solve it at each iteration. Milzarek et al. \cite{M.X.C.W.U.2019} proposed a globalized stochastic semismooth Newton method for solving $\ell_1$-regularized  logistic regression problem. In addition, there are some popular methods for solving convex composite optimization problems via quadratic approximation schemes such as the proximal Quasi-Newton method \cite{G.SK.2018}, improved GLMNET method \cite{Y.H.L.2012}, inexact regularized proximal Newton method \cite{Y.Z.S.2019}, and proximal Newton-type method \cite{M.Y.Z.2022}. For the $\ell_1$-regularized logistic regression problem \eqref{1.1} with the bias term, Koh et al. \cite{K.K.B.2007} propose an interior point method which applys a PCG method to find the search direction to solve the large-scale sparse problem \eqref{1.1}.  Furthermore, considering the sparsity of the solution,  there are many screening strategies attempt to solve the lasso problem. Tibshirani et al.  \cite{T.R.J.J.2012} proposed a simple strong rule that screens out far more predictors than the safe screening rule \cite{G.V.R.2010} but this improvement comes at the cost of incorrectly discarding the possibility of active predictors. Wang et al. \cite{W.J.J.2013}  proposed an efficient and effective screening rule via dual polytope projections (DPP) for lasso.  In terms of algorithm design, it is impressive that the  dual semismooth Newton method based proximal point algorithm (PPDNA) \cite{L.L.S.T.2019,L.S.T.2018,LXD.S.T.2018,ZYJ.Z.S.2020} has good numerical performance on  solving large-scale convex composite optimization problems, including  the exclusive Lasso model \cite{L.S.T.2019}, the group graphical Lasso model \cite{Z.Z.S.2020},  and the Dantzig selector \cite{F.L.X.2021}.  
The adaptive sieving (AS) strategy is a feature screening rule proposed in \cite{L.Y.S.2020} for exclusive lasso regularization.  Compared with other screening rules \cite{T.R.J.J.2012,W.J.J.2013,G.V.R.2010}, the adaptive sieving strategy can be applied to a more general regularizer that  induce solution sparseness. The main idea of the adaptive sieving strategy is to reduce the number of variables in the problem to improve efficiency of solving a series of the large scale problems \eqref{1.1} with sparse structure \cite{L.J.S.2021,B.L.2022,L.S.D.2022}. 


In this paper,  inspired by \cite{L.Y.S.2020,L.S.T.2019,L.S.T.2018,K.K.B.2007,Y.C.S.2021},   we shall design an efficient algorithm based on the dual semismooth Newton method for directly solving problem \eqref{1.1} with a bias term and employ an adaptive sieving technique based on the Karush-Kuhn-Tucker (KKT) conditions to generate solution paths of problem \eqref{1.1}. The reason why we choose to solve problem \eqref{1.1} directly is that we later consider the extension of the algorithms application in some compound convex optimization problems with relatively complex sparse regularizers, such as fused lasso, cluster lasso and other regularizers. Such problems cannot be equivalently transformed into univariate problems. Solving problem \eqref{1.1} with two variables directly has reference significance for us to apply PPDNA and sieving strategy to such problems in the future. In addition, the current way to generate the solution path of  problem \eqref{1.1} is generally to use algorithms to solve a series of problems. However, generating the solution path of large-scale problem \eqref{1.1} will incur an expensive time cost.  In order to improve the efficiency of generating solution paths, we adopt an adaptive sieving strategy to generate solution paths for a series of $\lambda$. Different from \cite{L.Y.S.2020}, we apply the adaptive sieving  strategy to the convex composite optimization problem with $\ell_1$-regularization. The existing second-order algorithms solve the $\ell_1$-regularized  logistic regression problem without bias  term as a special example. Considering the good performance of the adaptive sieving strategy in solving convex optimization problems with sparse regularizer, we try to combine the adaptive sieving strategy with these existing second-order algorithms to see its power.

The remaining parts of this paper are organized as follows. Sect.~\ref{sec:2} is devoted to exploring the  PPDNA for solving problem \eqref{1.1}, in which a semismooth Newton algorithm is applied to solve its internal subproblems. In Sect.~\ref{sec:3}, we combine the PPDNA with the AS strategy for generating solution paths of problem \eqref{1.1}. In Sect.~\ref{sec:4}, we compare our algorithms with some second-order methods on real and random data.  We make the conclusion of this paper in Sect.~\ref{sec:5}.

{\bf{Notation and preliminaries:}} 
The following notations and terminologies are used throughout the paper. We use  $``;"$ for adjoining vectors in a column. For given positive integer $m$, we denote ${\bm{I}}_m$ and ${\bm{1}}_m$ as the $m\times m$ identity matrix and the column vector of all ones, respectively. For given ${\bm{x}}\in \mathbb{R}^n$, we use $``|{\bm{x}}|"$ to denote the absolute vector whose $i$-th entry is $|{\bm{x}}_i|$ and $``{\rm{sgn}}({\bm{x}})"$ to denote the sgn vector whose $i$-th entry is $1$ if  ${\bm{x}}_i>0,$ $-1$ if ${\bm{x}}_i<0$, and $0$ otherwise. Denote $``{\rm{Diag}}({\bm{x}})"$ as the diagonal matrix whose diagonal is given by vector ${\bm{x}}$. For any self-adjoint positive semidefinite linear operator $\mathcal{M}:\mathbb{R}^n\to\mathbb{R}^n,$ we define $\langle {\bm{x}},{\bm{x}}'\rangle_{\mathcal{M}}:=\langle {\bm{x}},{\mathcal{M}}{\bm{x}}'\rangle$ and $\|{\bm{x}}\|_{\mathcal{M}}:=\sqrt{\langle {\bm{x}},{\bm{x}}\rangle_{\mathcal{M}}}$ for all ${\bm{x}},{\bm{x}}'\in \mathbb{R}^n$. We denote  the largest and smallest eigenvalues of $\mathcal{M}$ by $\lambda_{\max}(\mathcal{M})$ and $\lambda_{\min}(\mathcal{M})$, respectively. For given subset $\mathcal{C}\subseteq\mathbb{R}^n$, we define the weighted distance of ${\bm{x}}\in\mathbb{R}^n$  to  $\mathcal{C}$ by ${\rm dist}_{\mathcal{M}}({\bm{x}},\mathcal{C}):= \inf_{{\bm{x}}'\in\mathcal{C}}\|{\bm{x}}-{\bm{x}}'\|_{\mathcal{M}}$.
The $\ell_{\infty}$-norm unit  ball is defined by $\mathcal{B}_{\infty}:=\{{\bm{x}}\in\mathbb{R}^n | \ \|{\bm{x}}\|_{\infty}\leq 1 \}$. We use $``\circ"$ to denote the Hadamard  product.  

For a closed proper convex function $h:\mathbb{R}^m\to(-\infty,+\infty]$, the conjugate function of  $h(\cdot)$ is defined by $h^*({\bm{y}}):=\sup_{{\bm{x}}\in\mathbb{R}^m}\{\langle {\bm{x}},{\bm{y}}\rangle-h({\bm{x}})\}$.
We define a closed proper convex function $h(\cdot)$ as 
\begin{equation}\label{def1}
	h({\bm{x}}): = \frac{1}{m}\sum_{i=1}^{m}\log(1+\exp(-{\bm{b}}_i{\bm{x}}_i)),\ \forall {\bm{x}}\in\mathbb{R}^m.
\end{equation}
The the gradient of $h(\cdot)$ at ${\bm{x}}\in\mathbb{R}^m$ is given by
\begin{equation}\label{grad}
	\nabla h(x)_i=\frac{-b_i\exp(-b_ix_i)}{m(1+\exp(-b_ix_i))},\ i=1,\dots,m.
\end{equation}
Then the conjugate function of $h(\cdot)$ is obtained by
\begin{equation}\label{def2}
	h^*({\bm{y}})=-\frac{1}{m}\sum_{i=1}^{m}\log(\frac{{\bm{b}}_i}{{\bm{b}}_i+m{\bm{y}}_i})+\sum_{i=1}^{m}\frac{{\bm{y}}_i}{{\bm{b}}_i}\log(-\frac{{\bm{b}}_i}{m{\bm{y}}_i}-1),\ \forall {\bm{y}}\in {\rm{dom}}h^*,
\end{equation} 
where ${\rm{dom}}h^*=\{{\bm{y}}\in\mathbb{R}^m| -(1/m){\bm{1}}_m< {\bm{y}}\circ {\bm{b}}<{\bm{0}} \}$. It can be found that $h^*(\cdot)$ is strongly convex  and twice continuously differentiable. In addition, the gradient and Hessian of $h^*(\cdot)$ at ${\bm{y}}\in {\rm{dom}}h^*$ are respectively given by
\begin{align}
	&(\nabla h^*({\bm{y}}))_i=(1/{\bm{b}}_i)\log(-{\bm{b}}_i/(m{\bm{y}}_i)-1),\ i=1,\ldots,m,\label{def3}\\
	&\nabla^2 h^*({\bm{y}})={\rm{Diag}}({\hat{\bm{y}}}),\ {\rm{where}}\ \hat{{\bm{y}}}_i=-1/(({\bm{b}}_i+m{\bm{y}}_i){\bm{y}}_i),\ i=1,\ldots,m.\label{def4}
\end{align} 
For a closed proper convex function $f:\mathbb{R}^n\to(-\infty,+\infty]$, the Moreau-Yosida regularization and the proximal mapping of $f$ at ${\bm{x}}$ is defined by
\begin{equation*}
	\begin{split}
		E_f({\bm{x}}) & :=\min\limits_{{\bm{y}}\in \mathbb{R}^n}\{f({\bm{y}})+\frac{1}{2}\|{\bm{y}}-{\bm{x}}\|^2\},\ \forall {\bm{x}}\in \mathbb{R}^n,\\ {\rm{Prox}}_f({\bm{x}}) & :=\mathop{\arg\min}\limits_{{\bm{y}}\in \mathbb{R}^n}\{f({\bm{y}})+\frac{1}{2}\|{\bm{y}}-{\bm{x}}\|^2\},\ \forall {\bm{x}}\in \mathbb{R}^n.
	\end{split}
\end{equation*}
It is well known that $E_f(\cdot)$ is convex, continuously differentiable, and its gradient at ${\bm{x}}\in\mathcal{X}$ is $\nabla E_f({\bm{x}})={\bm{x}}-{\rm{Prox}}_f({\bm{x}})$. Furthermore, $\nabla E_f(\cdot)$ and ${\rm{Prox}}_f(\cdot)$ are globally Lipschitz continuous with modulus $1$  \cite{LC.SC.1997}.
Specifically, for given ${\bm{s}}\in\mathbb{R}^n_{++}$,  the proximal mapping of the weighted $\ell_1$-norm is  given by 
$$ 
{\rm{Prox}}_{{\bm{s}}^{\top}|\cdot|}({\bm{x}})={\rm{sgn}}({\bm{x}})\circ\max\{|{\bm{x}}|-{\bm{s}},{\bm{0}}\},\ \forall {\bm{x}}\in \mathbb{R}^n.
$$ 
It is obvious that $\ell_1$-norm is a special case of the weighted $\ell_1$-norm, that is, ${\bm{s}}=t{\bm{1}}_n$ where $t>0$, then it holds
$$
{\rm{Prox}}_{t\|\cdot\|_1}({\bm{x}})={\rm{sgn}}({\bm{x}})\circ\max\{|{\bm{x}}|-t{\bm{1}}_n,{\bm{0}}\},\ \forall {\bm{x}}\in \mathbb{R}^n.
$$
	\section{A dual Newton method based proximal point algorithm} \label{sec:2}
In this section, we shall introduce the specific details of a dual Newton method based proximal point algorithm (PPDNA) \cite{L.S.T.2019} for solving the equivalent form of problem \eqref{1.1}. Then we establish the global and local convergence of the PPDNA.

Define ${\bm{A}}$ as ${\bm{A}}=[{\bm{a}}_1,...,{\bm{a}}_m]^{\top}\in\mathbb{R}^{m\times n}$,  we rewrite problem \eqref{1.1}  as
\begin{equation}\label{2.1}\tag{${\rm{P}}_{\lambda}$}
	\mathop {\min }\limits_{{\bm{w}}\in\mathbb{R}^n,v\in\mathbb{R}}\  \{f({\bm{w}},v):=h({\bm{A}}{\bm{w}}+v{\bm{1}}_m)+\lambda p({\bm{w}})\}, 
\end{equation}
where $h(\cdot)$ is the  loss function defined in \eqref{def1}, $p(\cdot)=\|\cdot\|_1$ is the
$\ell_1$-regularization function. It is clear that $h(\cdot)$ is a twice continuously differentiable convex function, and $p(\cdot)$ is a closed proper convex function. Denote the optimal solution set of problem \eqref{2.1} by $\Omega_{\lambda}$ and  the Karush-Kuhn-Tucker (KKT) residual function corresponding to problem \eqref{2.1} by
$$
R_{\lambda}({\bm{w}},v):= \left [                
\begin{array}{c}    
	{\bm{w}}-{\rm{Prox}}_{\lambda p}({\bm{w}}-{\bm{A}}^{\top}\nabla h({\bm{A}}{\bm{w}}+v{\bm{1}}_m)) \\
	\nabla h({\bm{A}}{\bm{w}}+v{\bf{1}}_m)^{\top}{\bm{1}}_m
\end{array}
\right ]. 
$$
The KKT conditions imply that $({\bm{w}}^*,v^*)\in \Omega_{\lambda}$ if and only if $R_{\lambda}({\bm{w}}^*,v^*)={\bm{0}}.$ In this paper, we assume that $\Omega_{\lambda}$ is nonempty and compact. Actually, as stated in \cite[Section 2.1]{Z.SAM.2017}, this assumption is reasonable.

Problem \eqref{2.1} can be equivalently written as 
\begin{equation}\label{2.2}
	\mathop {\min }\limits_{{\bm{w}}\in\mathbb{R}^n,v\in\mathbb{R},{\bm{y}}\in\mathbb{R}^m} \{h({\bm{y}})+\lambda p({\bm{w}})\ | \ {\bm{y}}={\bm{A}}{\bm{w}}+v{\bm{1}}_m\}.
\end{equation}
The KKT conditions for problem \eqref{2.2} are formulated as follows:
\begin{equation}\label{2.3}
	\nabla h({\bm{y}})-{\bm{u}}={\bm{0}},\ -{\bm{A}}^{\top}{\bm{u}}\in \partial\lambda p({\bm{w}}),\ {\bm{y}}-{\bm{A}}{\bm{w}}-v{\bm{1}}_m={\bm{0}},\ {\bm{u}}^{\top}{\bm{1}}_m=0,
\end{equation}
where ${\bm{u}}\in\mathbb{R}^m$ is the Lagrange multiplier. For any given $({\bm{w}},v,{\bm{y}},{\bm{u}})\in \mathbb{R}^n\times \mathbb{R}\times\mathbb{R}^m\times\mathbb{R}^m$,
the KKT residual function of problem \eqref{2.2} is defined by
\begin{equation} \nonumber  
	\hat{R}_{\lambda}({\bm{w}},v,{\bm{y}},{\bm{u}}):= \left [                
	\begin{array}{c}   
		\nabla h({\bm{y}})-{\bm{u}} \\  
		{\bm{w}}-{\rm{Prox}}_{\lambda p}({\bm{w}}-{\bm{A}}^{\top}{\bm{u}}) \\
		{\bm{y}}-{\bm{Aw}}-v{\bm{1}}_m \\
		{\bm{u}}^{\top}{\bm{1}}_m
	\end{array}
	\right ].         
\end{equation}
Since problem \eqref{2.1} and problem \eqref{2.2} are equivalent, we can know that their corresponding KKT conditions are also equivalent.

Given the initial points ${\bm{w}}^0\in\mathbb{R}^n,v^0\in\mathbb{R}$, the preconditioned proximal point algorithm \cite{L.S.T.2020} generates sequences $\{{\bm{w}}^k\}\subseteq\mathbb{R}^n,\{v^k\}\subseteq\mathbb{R}$ by the following proximal rule:
\begin{equation}\label{2.5}
	\begin{split}
		({\bm{w}}^{k+1};v^{k+1})\approx\mathcal{P}_k({\bm{w}}^k,v^k):=&\mathop {\arg\min }\limits_{{\bm{w}}\in\mathbb{R}^n,v\in\mathbb{R}} \{ h({\bm{Aw}}+v{\bm{1}}_m)+\lambda p({\bm{w}})\\
		&+\frac{1}{2\sigma_k}\|{\bm{w}}-{\bm{w}}^k\|^2+\frac{1}{2\gamma_k}(v-v^k)^2\},
	\end{split}
\end{equation}
where both $\{\sigma_k\}$ and $\{\gamma_k\}$ are sequences of positive real numbers. 
The corresponding Lagrangian function of problem \eqref{2.5} is 
\begin{equation*}\label{2.6}
	\begin{split}
		L({\bm{w}},v,{\bm{y}};{\bm{u}}) = & h({\bm{y}})+\lambda p({\bm{w}})
		+\frac{1}{2\sigma_k}\|{\bm{w}}-{\bm{w}}^k\|^2+\frac{1}{2\gamma_k}(v-v^k)^2\\
		&+\left\langle {\bm{u}},{\bm{Aw}}+v{\bm{1}}_m-{\bm{y}}\right\rangle. 
	\end{split}
\end{equation*}
The dual of problem \eqref{2.5} admits the following minimization form:
\begin{equation*}\label{2.7}
	\min\limits_{{\bm{u}}\in {\rm{dom}}h^*} \left(\psi_k({\bm{u}})=-\inf_{\bm{w},v,\bm{y}} 	L({\bm{w}},v,{\bm{y}};{\bm{u}}) \right)
\end{equation*}
where  ${\rm{dom}}h^*$ is described in \eqref{def2} and the dual function $\psi_k(\cdot)$ is given by 
\begin{equation}\label{2.8}
	\begin{split}
		\psi_k({\bm{u}})=&h^*({\bm{u}})-\frac{1}{\sigma_k}E_{\sigma_k\lambda p}({\bm{w}}^k-\sigma_k{\bm{A}}^{\top}{\bm{u}})+\frac{1}{2\sigma_k}\|{\bm{w}}^k-\sigma_k {\bm{A}}^{\top}{\bm{u}}\|^2\\
		&-\frac{1}{2\sigma_k}\|{\bm{w}}^k\|^2 +\frac{1}{2\gamma_k}(v^k-\gamma_k {\bm{u}}^{\top}{\bf{1}}_m)^2-\frac{1}{2\gamma_k}(v^k)^2,\ \forall {\bm{u}}\in {\rm{dom}}h^*.
	\end{split}
\end{equation}
In fact, one easily deduces that if ${\bm{u}}^{k+1} \approx \underset{{\bm{u}}\in{{\rm{dom}}h^*}}{\rm argmin} \psi_k({\bm{u}}),$ then 
$$
({\bm{x}}^{k+1},v^{k+1},{\bm{y}}^{k+1})=\mathop {\arg\min }\limits_{{\bm{w}},v,{\bm{y}}}L({\bm{w}},v,{\bm{y}};{\bm{u}}^{k+1}),
$$ 
that is, ${\bm{w}}^{k+1}$ and $v^{k+1}$ in \eqref{2.5} have the following closed-form expressions respectively:
$$
{\bm{w}}^{k+1}={\rm Prox}_{\sigma_k \lambda \|\cdot\|_1}({\bm{w}}^k-\sigma_k{\bm{A}}^{\top}{\bm{u}}^{k+1}),\ v^{k+1}=v^k-\gamma_k({\bm{u}}^{k+1})^{\top}\boldsymbol{1}_m.
$$
In addition, the auxiliary variable ${\bm{y}}^{k+1}$ admits the closed-form expression: 
\begin{equation}\label{aux}
	{\bm{y}}^{k+1}_i=\frac{1}{{\bm{b}}_i}\log(-\frac{{\bm{b}}_i}{m{\bm{u}}^{k+1}_i}-1),\ \forall i=1,...,m,
\end{equation}
which implies that  $\nabla h({\bm{y}})-{\bm{u}}={\bm{0}}$ always holds  in each iteration.
\subsection{The PPDNA and Its Convergence}
In this subsection,	the  framework of a dual Newton method based proximal point algorithm and the results of its convergence are briefly described below.

\begin{algorithm}
		\caption{ {(PPDNA)} Dual Newton method based proximal point algorithm for \eqref{2.1} } \label{al1}
	\begin{algorithmic}[1]
		\Require $\sigma_0>0$, $\gamma_0>0$, $({\bm{w}}^0,v^0)\in\mathbb{R}^n\times\mathbb{R} , {\bm{u}}^0\in{{\rm{dom}}h^*}$. Set $k=0$. 
		\State Approximately compute
		\begin{equation}\label{sub}
			{\bm{u}}^{k+1} \approx \underset{{\bm{u}}\in{{\rm{dom}}h^*}}{\rm argmin} \psi_k({\bm{u}}),
		\end{equation}
		to satisfy the stopping criteria \eqref{eqn1} and \eqref{eqn2}.
		\State Compute ${\bm{w}}^{k+1}={\rm Prox}_{\sigma_k \lambda \|\cdot\|_1}({\bm{w}}^k-\sigma_k{\bm{A}}^{\top}{\bm{u}}^{k+1}),\ v^{k+1}=v^k-\gamma_k({\bm{u}}^{k+1})^{\top}\bm{1}_m.$
		\State Update $\sigma_{k+1} \uparrow \sigma_\infty\leq \infty,\gamma_{k+1} \uparrow \gamma_\infty\leq \infty$, $k\leftarrow k+1,$ and go to step $1$.
	\end{algorithmic}
\end{algorithm}

In order to facilitate the convergence analysis of the PPDNA, we first define the following function $f_k(\cdot)$: 
\begin{equation}\label{fk}
	f_k({\bm{w}},v):=h({\bm{Aw}}+v{\bf{1}}_m)+\lambda p({\bm{w}})+\frac{1}{2\sigma_k}\| ({\bm{w}};v)             
	-({\bm{w}}^k;v^k)\|_{\mathcal{M}_k}^2,
\end{equation}
where ${{\mathcal{M}}}_k={\rm{Diag}}({\bm{1}}_n;\sigma_k{\gamma_k}^{-1})\in\mathbb{R}^{(n+1)\times(n+1)}$.
Then, problem \eqref{2.5} can be equivalently rewritten as
\begin{equation}\label{e2.5}
	({\bm{w}}^{k+1};v^{k+1})
	\approx\mathcal{P}_k({\bm{w}}^k,v^k):=\mathop {\arg\min }\limits_{{\bm{w}},v} \left\{f_k({\bm{w}},v)\right\}.
\end{equation}
Additionally, we  make the following assumption \cite{L.S.T.2020}.
\begin{assumption}\label{assumption1}
	The sequences $\{\sigma_k\gamma_k^{-1}\}$ and $\{\sigma_k\}$ of positive real numbers are both bounded away from $0$. The sequence $\{\mathcal{M}_k\}$ of self-adjoint positive definite linear operators satisfies the following conditions 
	\begin{equation}\nonumber
		\mathcal{M}_{k}\succeq\mathcal{M}_{k+1},\ \mathcal{M}_k\succeq\min\{1,\sigma_\infty\gamma_\infty^{-1}\}{\bm{I}}_{n+1},\ \forall k\geq 0.
	\end{equation}
\end{assumption} 

As studied in \cite{R.1976}, the stopping criteria for step $1$ in the PPDNA algorithm are as follows:
\begin{align}
	\| ({\bm{w}}^{k+1};v^{k+1})-\mathcal{P}_k({\bm{w}}^k,v^k) \|_{\mathcal{M}_{k}} & \leq \epsilon_k,\ \epsilon_k\geq 0,\ \sum_{k=0}^{\infty}\epsilon_k<\infty, \label{eqn1}\tag{A}\\
	\| ({\bm{w}}^{k+1};v^{k+1}) -\mathcal{P}_k({\bm{w}}^k,v^k) \|_{\mathcal{M}_{k}}& \leq \delta_k\|({\bm{w}}^{k+1};v^{k+1}) -({\bm{w}}^{k};v^{k}) \|_{\mathcal{M}_{k}}, \label{eqn2}\tag{B}
\end{align}
where the sequence $\{\delta_k\}$ satisfies $0\leq \delta_k<1$ and $\ \sum_{k=0}^{\infty}\delta_k<\infty$. The above stopping criteria are generally not achievable in practice due to the difficulty of computing $\mathcal{P}_k({\bm{w}}^k,v^k)$. However, it follows from \cite[Proposition $3$]{L.S.T.2019} that the above stopping criteria can be replaced by the following implementable criteria.
\begin{proposition}\label{prop:criteria}
	In Algorithm \ref{al1}, the stopping criteria \eqref{eqn1} and \eqref{eqn2} in solving problem \eqref{sub} can be achieved by the following  implementable criteria:
	\begin{align}
		f_k({\bm{w}}^{k+1},v^{k+1})+\psi_k({\bm{u}}^{k+1}) & \leq\frac{\epsilon_k^2}{2\sigma_k},\ \epsilon_k\geq 0,\ \sum_{k=0}^{\infty}\epsilon_k<\infty, \label{eqn3}\tag{A'}\\
		f_k({\bm{w}}^{k+1},v^{k+1})+\psi_k({\bm{u}}^{k+1}) & \leq \frac{\delta_k^2}{2\sigma_k}\|({\bm{w}}^{k+1};v^{k+1}) -({\bm{w}}^{k};v^{k}) \|_{\mathcal{M}_{k}}^2,\sum_{k=0}^{\infty}\delta_k<\infty,\label{eqn4}\tag{B'}
	\end{align}
	where for $k=1,2,\ldots$, $0\leq \delta_k<1$ and $f_k(\cdot)$ and $\psi_k(\cdot)$ are defined in \eqref{fk} and \eqref{2.8} respectively.
\end{proposition}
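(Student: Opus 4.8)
The plan is to exploit the strong convexity that the proximal term contributes to $f_k$, combined with weak duality between the subproblem \eqref{2.5} and its dual, so as to bound the primal error $\|({\bm{w}}^{k+1};v^{k+1})-\mathcal{P}_k({\bm{w}}^k,v^k)\|_{\mathcal{M}_k}$ appearing in \eqref{eqn1}--\eqref{eqn2} by the computable duality gap $f_k({\bm{w}}^{k+1},v^{k+1})+\psi_k({\bm{u}}^{k+1})$ appearing in \eqref{eqn3}--\eqref{eqn4}. Throughout I write $z^{k+1}:=({\bm{w}}^{k+1};v^{k+1})$, $z^k:=({\bm{w}}^k;v^k)$, and $z^*:=\mathcal{P}_k({\bm{w}}^k,v^k)$ for the exact minimizer of $f_k$.

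First I would show that $f_k$ is strongly convex with modulus $1/\sigma_k$ in the $\mathcal{M}_k$-norm. Splitting $f_k=g+\frac{1}{2\sigma_k}\|\cdot-z^k\|_{\mathcal{M}_k}^2$ with $g({\bm{w}},v):=h({\bm{Aw}}+v{\bm{1}}_m)+\lambda p({\bm{w}})$ convex, the Hessian of the proximal term is exactly $\frac{1}{\sigma_k}\mathcal{M}_k$, and the optimality condition for $z^*$ furnishes a subgradient $s^*:=-\frac{1}{\sigma_k}\mathcal{M}_k(z^*-z^k)\in\partial g(z^*)$. Adding the subgradient inequality $g(z^{k+1})\geq g(z^*)+\langle s^*,z^{k+1}-z^*\rangle$ to the exact expansion of $\|z^{k+1}-z^k\|_{\mathcal{M}_k}^2-\|z^*-z^k\|_{\mathcal{M}_k}^2$, the terms linear in $s^*$ cancel and one is left with
\[
f_k(z^{k+1})-f_k(z^*)\geq\frac{1}{2\sigma_k}\|z^{k+1}-z^*\|_{\mathcal{M}_k}^2.
\]

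Next I would invoke weak duality between \eqref{2.5} and its dual $\min_{{\bm{u}}}\psi_k({\bm{u}})$: for every feasible ${\bm{u}}^{k+1}$ one has $-\psi_k({\bm{u}}^{k+1})\leq f_k(z^*)$, whence $f_k(z^{k+1})-f_k(z^*)\leq f_k(z^{k+1})+\psi_k({\bm{u}}^{k+1})$. Chaining this with the strong-convexity bound yields the key estimate
\[
\|z^{k+1}-\mathcal{P}_k({\bm{w}}^k,v^k)\|_{\mathcal{M}_k}^2\leq 2\sigma_k\big(f_k({\bm{w}}^{k+1},v^{k+1})+\psi_k({\bm{u}}^{k+1})\big).
\]
Substituting the right-hand side of \eqref{eqn3} gives $\|z^{k+1}-\mathcal{P}_k({\bm{w}}^k,v^k)\|_{\mathcal{M}_k}^2\leq\epsilon_k^2$, which is precisely \eqref{eqn1}; substituting \eqref{eqn4} instead gives the bound $\delta_k^2\|z^{k+1}-z^k\|_{\mathcal{M}_k}^2$, so that taking square roots recovers \eqref{eqn2}.

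I expect the principal obstacle to be the first step, namely verifying the strong-convexity inequality with exactly the modulus $1/\sigma_k$ in the $\mathcal{M}_k$-geometry; the care here lies in working with the weighted inner product and recognizing that the proximal term contributes curvature $\frac{1}{\sigma_k}\mathcal{M}_k$ rather than a multiple of the identity, so that the cross terms involving $s^*$ cancel cleanly. A secondary point to confirm is that only weak duality — the inequality $f_k(z^*)\geq-\psi_k({\bm{u}}^{k+1})$ — is needed, so that no attainment of the dual optimum must be assumed.
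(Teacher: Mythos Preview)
Your proposal is correct and follows precisely the standard argument that the paper invokes by citing \cite[Proposition~3]{L.S.T.2019} (the paper omits the details entirely). The two-step scheme---strong convexity of $f_k$ with modulus $1/\sigma_k$ in the $\mathcal{M}_k$-norm, followed by weak duality $f_k(z^*)\geq-\psi_k({\bm u}^{k+1})$---is exactly the mechanism underlying that reference, and your cancellation of the cross terms via the first-order optimality condition for $z^*$ is the clean way to obtain the constant $\tfrac{1}{2\sigma_k}$.
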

\begin{proof}
	We can demonstrate this conclusion by following the proof in  the literature \cite[Proposition $3$]{L.S.T.2019}. Here we omit the details of the proof. 
\end{proof}

Since local convergence of the PPDNA relies on the assumption of error bounds, some results on error bounds should be given before we analyze the convergence of the PPDNA. The maximal monotone operator \cite{R.R.1976} of the objective function $f$ in problem \eqref{2.1} is defined by
\begin{equation}\label{gradf}
	\mathcal{T}_f({\bm{w}},v):= \partial f({\bm{w}},v)=\left [              
	\begin{array}{c}   
		{\bm{A}}^{\top}\nabla h({\bm{Aw}}+v{\bm{1}}_m)+\partial \lambda p({\bm{w}}) \\ [5pt]
		{\bm{1}}_m^{\top}\nabla h({\bm{Aw}}+v{\bm{1}}_m)
	\end{array}
	\right ].
\end{equation}
	\begin{theorem}
		The error bound condition holds for problem \eqref{2.1}, i.e.,  $\forall ({\bm{w}},v)\in\mathbb{R}^n\times \mathbb{R}\ {\rm{satisfying}} \ {\rm{dist}}(({\bm{w}},v),\Omega_{\lambda})\leq r$, we have
		\begin{equation}\label{2.11}
			{\rm{dist}}(({\bm{w}},v),\Omega_{\lambda})\leq \kappa {\rm{dist}}({\bm{0}},\mathcal{T}_f({\bm{w}},v)).
		\end{equation}
	\end{theorem}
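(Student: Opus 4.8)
The plan is to recognize \eqref{2.1} as a structured composite convex program and to invoke the unified error-bound theory for such problems. Writing $\bm{E}:=[\bm{A},\,\bm{1}_m]\in\mathbb{R}^{m\times(n+1)}$ and $\bm{x}:=(\bm{w};v)$, the objective reads $f(\bm{w},v)=h(\bm{E}\bm{x})+\lambda p(\bm{w})$, i.e. a strongly convex loss composed with a linear map plus a polyhedral regularizer. The estimate \eqref{2.11} is precisely the assertion that $\mathcal{T}_f=\partial f$ is metrically subregular at every point of $\Omega_{\lambda}$ for the origin, with one modulus $\kappa$ valid uniformly on the tube $\{(\bm{w},v):{\rm{dist}}((\bm{w},v),\Omega_{\lambda})\le r\}$. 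Since $\Omega_{\lambda}$ is compact, a local modulus together with a finite covering produces such a uniform $\kappa$, so it suffices to establish the Luo--Tseng error bound for $f$ and then translate it into the subdifferential form \eqref{2.11}. I would obtain the former from the framework of \cite{Z.SAM.2017}, after checking its hypotheses.

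Verification of the hypotheses is the routine part. The regularizer $p=\|\cdot\|_1$ is polyhedral, so the graph of $\partial(\lambda p)$ is a union of finitely many polyhedra. For the loss, \eqref{def1}--\eqref{grad} show $h$ is twice continuously differentiable with diagonal Hessian $\nabla^2h(\bm{x})={\rm{Diag}}(\bm{d})$, where $\bm{d}_i=\frac{\bm{b}_i^2\exp(-\bm{b}_i\bm{x}_i)}{m(1+\exp(-\bm{b}_i\bm{x}_i))^2}>0$; equivalently, by the duality $\nabla^2h^*({\bm{y}})={\rm{Diag}}(\hat{\bm{y}})$ with $\hat{\bm{y}}_i>0$ recorded in \eqref{def4}, $h$ is everywhere strictly convex with positive-definite Hessian. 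On any compact set these diagonal entries are bounded above and below away from $0$, so $h$ is strongly convex there and $\nabla h$ is Lipschitz. Because $\Omega_{\lambda}$ is compact and only points with ${\rm{dist}}((\bm{w},v),\Omega_{\lambda})\le r$ are considered, all arguments $\bm{E}\bm{x}$ range over a fixed compact set, on which the required strong-convexity and gradient-Lipschitz constants are uniform. Thus all assumptions of the structured error-bound theorem hold, and it delivers a constant $\kappa'$ with ${\rm{dist}}((\bm{w},v),\Omega_{\lambda})\le\kappa'\,\|R_{\lambda}(\bm{w},v)\|$ on the tube. Finally, since $\nabla h$ is Lipschitz on the relevant set, the proximal residual $R_{\lambda}$ and the subdifferential distance ${\rm{dist}}(\bm{0},\mathcal{T}_f(\bm{w},v))$ are equivalent up to a constant factor, which upgrades this to \eqref{2.11}.

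The substantive obstacle is that in the high-dimensional regime $m\ll n$ the matrix $\bm{E}=[\bm{A},\bm{1}_m]$ is rank deficient, so $f=h\circ\bm{E}+\lambda p$ is \emph{not} strongly convex and \eqref{2.11} cannot be read off from strong convexity of the whole objective. The bound must instead exploit the interplay between the strong convexity of the outer map $h$ and the polyhedrality of $p$, which is exactly the hard content of the theory I invoke. A self-contained route making this explicit would first use the strict convexity of $h$ to show that $\bar{\bm{y}}:=\bm{A}\bm{w}^*+v^*\bm{1}_m$, and hence $\bar{\bm{u}}:=\nabla h(\bar{\bm{y}})$, are constant over $\Omega_{\lambda}$; this forces $\Omega_{\lambda}=\{(\bm{w},v):\bm{E}(\bm{w};v)=\bar{\bm{y}},\,-\bm{A}^{\top}\bar{\bm{u}}\in\partial(\lambda p)(\bm{w})\}$ to be a polyhedron, after which Hoffman's (Robinson's) polyhedral error bound, combined with the strong-convexity estimate controlling $\|\bm{E}\bm{x}-\bar{\bm{y}}\|$, yields \eqref{2.11}. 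In either route the delicate step is controlling the $\ell_1$-subdifferential inclusion $-\bm{A}^{\top}\bm{u}\in\partial(\lambda p)(\bm{w})$ and keeping the constant uniform across the active-set partition, and this is where I expect the main work to lie.
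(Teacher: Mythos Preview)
Your proposal is correct and follows essentially the same line as the paper: verify that $h$ is strongly convex with Lipschitz gradient on compact sets and that $p$ is polyhedral, invoke \cite[Proposition~6]{Z.SAM.2017} to obtain the Luo--Tseng error bound ${\rm dist}((\bm{w},v),\Omega_{\lambda})\le\kappa_0\|R_{\lambda}(\bm{w},v)\|$ on the tube, and then pass from the proximal residual $R_{\lambda}$ to ${\rm dist}(\bm{0},\mathcal{T}_f(\bm{w},v))$. The paper handles this last translation by citing \cite{L.S.T.2019}, whereas you spell out the equivalence via the Lipschitz property of $\nabla h$; your alternative self-contained Hoffman-type route is additional material not present in the paper.
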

	\begin{proof}
		For problem \eqref{2.1}, it is easy to see that $h(\cdot)$ is continuously differentiable on ${\rm{dom}}h$, strongly convex on any compact convex set $\mathcal{V}\subseteq {\rm{dom}}h$ and its gradient $\nabla h$ is Lipschitz continuous on $\mathcal{V}$. Moreover, $p(\cdot)$ is a polyhedral convex regularizer. For given $r>0$, define
		$$
		\bar{\Omega}_{r}:=\{({\bm{w}},v)\in \mathbb{R}^n\times \mathbb{R}\ |\ {\rm{dist}}(({\bm{w}},v),\Omega_{\lambda})\leq r\}.
		$$
		Since $\Omega_{\lambda}$ is compact, it can be found that $\bar{\Omega}_{r}$ is compact and thus $\epsilon_f=\max_{({\bm{w}},v)\in\bar{\Omega}_{r}}f({\bm{w}},v)$ is finite. By virtue of the properties of $h(\cdot)$ and $p(\cdot)$, combining with \cite[Proposition 6]{Z.SAM.2017}, one can obtain that $\mathcal{T}_f$ satisfies the error bound with the proximal map-based residual function, i.e., for the above $\epsilon_f$, there exist constants $\kappa_0,\epsilon >0$ such that $\forall ({\bm{w}},v)\in\mathbb{R}^n\times \mathbb{R} \ {\rm{with}}\  f({\bm{w}},v)\leq \epsilon_f$ and $\|{R}_{\lambda}({\bm{w}},v)\|\leq \epsilon$, it holds that
		\begin{equation*}
			{\rm{dist}}(({\bm{w}},v),\Omega_{\lambda})\leq \kappa_0\|{R}_{\lambda}({\bm{w}},v)\|.
		\end{equation*}
		
		The remaining proof  follows from the work of Li et al. \cite{L.S.T.2019} without any difficulty. Here we omit the details of the proof.
	\end{proof}
	
	From the work of Li et al. \cite{L.S.T.2020}, we readily get the following results for the global and local convergence of the PPDNA.
	\begin{theorem}
		(1) Let $\{({\bm{w}}^k,v^k)\}$ be the sequence generated by the PPDNA with stopping criterion \eqref{eqn1}. Then $\{({\bm{w}}^k,v^k)\}$ is bounded and 
		\begin{equation}\nonumber
			{\rm{dist}}_{\mathcal{M}_{k+1}}(({\bm{w}}^{k+1},v^{k+1}),\Omega_{\lambda})\leq {\rm{dist}}_{\mathcal{M}_{k}}(({\bm{w}}^{k},v^{k}),\Omega_{\lambda})+\epsilon_k,\ \forall k\geq 0.
		\end{equation}
		In addition, $\{({\bm{w}}^k,v^k)\}$ converges to the optimal point $\{({\bm{w}}^*,v^*)\}$ of problem \eqref{2.1} such that ${\bm{0}}\in \mathcal{T}_f({\bm{w}}^*,v^*)$.
		
		(2) Let $r:=\sum_{i=0}^{\infty}\epsilon_k+{\rm{dist}}_{\mathcal{M}_{0}}(({\bm{w}}^{0},v^{0}),\Omega_{\lambda})$. Then, for this $r>0$, there exists a constant $\kappa>0$ such that $\mathcal{T}_f$ satisfies the error bound condition \eqref{2.11}. Suppose that $\{({\bm{w}}^{k},v^{k})\}$ is the sequence generated by the PPDNA with the stopping criteria \eqref{eqn1} and \eqref{eqn2} with nondecreasing $\{\sigma_k\}$. Then it holds that for all $k\geq 0$, 
		\begin{equation*}
			{\rm{dist}}_{\mathcal{M}_{k+1}}(({\bm{w}}^{k+1},v^{k+1}),\Omega_{\lambda})\leq \mu_k{\rm{dist}}_{\mathcal{M}_{k}}(({\bm{w}}^{k},v^{k}),\Omega_{\lambda}),
		\end{equation*}
		where 
		$$
		\mu_k=\frac{1}{1-\delta_k}\left[\delta_k+\frac{(1+\delta_k)\kappa\lambda_{\max}(\mathcal{M}_k)}{\sqrt{\sigma_k^2+\kappa^2\lambda_{\max}^2(\mathcal{M}_k)}}\right]
		$$
		and 
		$$
		\mathop{\lim\sup}\limits_{k\to\infty}\mu_k=\mu_{\infty}=\frac{\kappa\lambda_{\infty}}{\sqrt{\sigma_\infty^2+\kappa^2\lambda_{\infty}^2}}<1
		$$
		with $\lambda_\infty=\mathop{\lim\sup}\limits_{k\to\infty}\lambda_{\max}(\mathcal{M}_k).$ In addition, it holds that for all $k\geq0$,
		$$
		{\rm{dist}}(({\bm{w}}^{k+1},v^{k+1}),\Omega_{\lambda})\leq \frac{\mu_k}{\sqrt{\lambda_{\min}(\mathcal{M}_{k+1})}}{\rm{dist}}_{\mathcal{M}_{k}}(({\bm{w}}^{k},v^{k}),\Omega_{\lambda}).
		$$
	\end{theorem}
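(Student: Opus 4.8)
The plan is to recognize the map $\mathcal{P}_k$ defined in \eqref{e2.5} as the proximal point (resolvent) operator associated with the maximal monotone operator $\mathcal{T}_f=\partial f$ from \eqref{gradf}, taken in the inner product induced by $\mathcal{M}_k$, and then to invoke the preconditioned proximal point framework of Li et al. \cite{L.S.T.2020} after checking that its hypotheses hold here. Since $f$ in \eqref{2.1} is closed, proper and convex, $\mathcal{T}_f$ is maximal monotone, and the exact update satisfies $\mathcal{P}_k({\bm{w}}^k,v^k)=(\mathcal{I}+\sigma_k\mathcal{M}_k^{-1}\mathcal{T}_f)^{-1}({\bm{w}}^k;v^k)$, which is firmly nonexpansive in the $\mathcal{M}_k$-norm. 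Assumption \ref{assumption1} supplies the two structural ingredients we need: the uniform lower bound $\mathcal{M}_k\succeq\min\{1,\sigma_\infty\gamma_\infty^{-1}\}{\bm{I}}_{n+1}$ and the monotonicity $\mathcal{M}_k\succeq\mathcal{M}_{k+1}$.

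For part (1) I would first establish Fej\'er-type monotonicity in the varying metric. Fixing any $({\bm{w}}^*,v^*)\in\Omega_\lambda$, which is a fixed point of $\mathcal{P}_k$ for every $k$, firm nonexpansiveness gives $\|\mathcal{P}_k({\bm{w}}^k,v^k)-({\bm{w}}^*;v^*)\|_{\mathcal{M}_k}\leq\|({\bm{w}}^k;v^k)-({\bm{w}}^*;v^*)\|_{\mathcal{M}_k}$. The inexactness criterion \eqref{eqn1} contributes at most $\epsilon_k$ by the triangle inequality, and the ordering $\mathcal{M}_{k+1}\preceq\mathcal{M}_k$ lets one replace the $\mathcal{M}_k$-norm on the left by the $\mathcal{M}_{k+1}$-norm; minimizing over $\Omega_\lambda$ then yields the stated inequality ${\rm{dist}}_{\mathcal{M}_{k+1}}(({\bm{w}}^{k+1},v^{k+1}),\Omega_\lambda)\leq{\rm{dist}}_{\mathcal{M}_k}(({\bm{w}}^k,v^k),\Omega_\lambda)+\epsilon_k$. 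Summability $\sum_k\epsilon_k<\infty$ makes these distances bounded, hence $\{({\bm{w}}^k,v^k)\}$ is bounded; a standard quasi-Fej\'er argument then shows every cluster point satisfies ${\bm{0}}\in\mathcal{T}_f$, and uniqueness of the limit follows because the $\mathcal{M}_k$-distances to $\Omega_\lambda$ converge.

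For part (2) the linear rate is obtained by coupling criterion \eqref{eqn2} with the error bound \eqref{2.11} established in the preceding theorem. Setting $r:=\sum_k\epsilon_k+{\rm{dist}}_{\mathcal{M}_0}(({\bm{w}}^0,v^0),\Omega_\lambda)$ guarantees, via the nonexpansive estimate of part (1), that all iterates remain inside the region $\bar{\Omega}_r$ on which \eqref{2.11} holds with some modulus $\kappa$. The approximate optimality condition for \eqref{e2.5} yields, up to the factor $1/\sigma_k$, an element of $\mathcal{T}_f({\bm{w}}^{k+1},v^{k+1})$ of $\mathcal{M}_k$-size comparable to $\|({\bm{w}}^{k+1};v^{k+1})-({\bm{w}}^k;v^k)\|_{\mathcal{M}_k}$; applying \eqref{2.11} to this element and controlling the relative error by $\delta_k$ through \eqref{eqn2} produces the one-step contraction with factor $\mu_k=\frac{1}{1-\delta_k}[\delta_k+(1+\delta_k)\kappa\lambda_{\max}(\mathcal{M}_k)/\sqrt{\sigma_k^2+\kappa^2\lambda_{\max}^2(\mathcal{M}_k)}]$. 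Taking $\limsup$ and using $\delta_k\to0$ gives $\mu_\infty<1$, while the final bound in the unweighted distance follows from ${\rm{dist}}(\cdot)\leq\lambda_{\min}(\mathcal{M}_{k+1})^{-1/2}{\rm{dist}}_{\mathcal{M}_{k+1}}(\cdot)$.

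The main obstacle will be part (2): converting the error bound on $\mathcal{T}_f$ into the explicit rate $\mu_k$ in the \emph{variable} metric $\mathcal{M}_k$. One must track how $\lambda_{\max}(\mathcal{M}_k)$ enters through the error bound and how the nondecreasing $\{\sigma_k\}$ interacts with the monotone sequence $\{\mathcal{M}_k\}$, since the geometry changes at every iteration whereas the Luque-type rate argument was originally framed in a fixed Euclidean metric. Verifying that the factor $\kappa\lambda_{\max}(\mathcal{M}_k)/\sqrt{\sigma_k^2+\kappa^2\lambda_{\max}^2(\mathcal{M}_k)}$ is exactly the correct geometric quantity, and that its $\limsup$ stays strictly below $1$ with $\lambda_\infty=\limsup_k\lambda_{\max}(\mathcal{M}_k)$, is the delicate step.
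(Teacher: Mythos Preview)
Your proposal is correct and takes essentially the same approach as the paper: both reduce the claim to the preconditioned proximal point framework of Li, Sun and Toh \cite{L.S.T.2020}, with part (1) following from their global convergence result (Proposition~2.3 there) and part (2) from their linear rate under the error bound (Proposition~2.5), after noting that Assumption~\ref{assumption1} and the error bound \eqref{2.11} supply the needed hypotheses. The paper simply cites those propositions directly, whereas you have additionally unpacked the Fej\'er-monotonicity and Luque-type arguments that lie inside them; the ``delicate step'' you flag about tracking $\lambda_{\max}(\mathcal{M}_k)$ in the variable metric is precisely what Proposition~2.5 of \cite{L.S.T.2020} handles, so no new work is required here.
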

	\begin{proof}
		The proof of item (1) follows from \cite[Proposition 2.3]{L.S.T.2020}. Combining the error bound \eqref{2.11} of Proposition \ref{prop:criteria}  with  \cite[Proposition 2.5]{L.S.T.2020}, we complete the proof of item (2).
	\end{proof}

	\subsection{A Semismooth Newton Algorithm for Solving Subproblems}
	In this subsection, we focus on  an efficient semismooth Newton algorithm \cite{K.1988,Q.S.1993,S.S.2002,M.R.1977} for solving the  subproblem \eqref{sub} in the PPDNA. Given $\sigma,\gamma>0$ and $(\tilde{\bm{w}},\tilde{v})\in\mathbb{R} ^n\times \mathbb{R},$ we aim to solve problem \eqref{sub}.  We define the function $\psi(\cdot)$ as 
	\begin{equation}\nonumber
		\begin{split}
			\psi({\bm{u}}):=&h^*({\bm{u}})-\frac{1}{\sigma_k}E_{\sigma\lambda p}(\tilde{\bm{w}}-\sigma{\bm{A}}^{\top}{\bm{u}})+\frac{1}{2\sigma}\|\tilde{\bm{w}}-\sigma_k {\bm{A}}^{\top}{\bm{u}}\|^2\\
			&-\frac{1}{2\sigma}\|\tilde{\bm{w}}\|^2 +\frac{1}{2\gamma}(\tilde{v}-\gamma {\bm{u}}^{\top}{\bf{1}}_m)^2-\frac{1}{2\gamma_k}\tilde{v}^2,\ \forall {\bm{u}}\in {\rm{dom}}h^*.
		\end{split}
	\end{equation}
	Since $\psi(\cdot)$ is strongly convex and continuously differentiable, problem \eqref{sub} has a unique optimal solution ${\bm{u}}^*\in{{\rm{dom}}h^*}$, which can be obtained by solving the following nonsmooth equations 
	\begin{equation*}
		\nabla\psi({\bm{u}})=\nabla h^*({\bm{u}})-{\bm{A}}{\rm{Prox}}_{\sigma \lambda \|\cdot\|_1}({\tilde{\bm{w}}}-\sigma {\bm{A}}^{\top}{\bm{u}})-(\tilde{v}-\gamma {\bm{u}}^{\top}{\bm 1}_m){\bm 1}_m={\bm{0}}, \forall {\bm{u}}\in {{\rm{dom}}h^*}.
	\end{equation*}
	Note that $\nabla h^*(\cdot)$ and ${\rm{Prox}}_{\sigma\lambda \|\cdot\|_1}(\cdot)$ are both Lipschitz continuous, one knows that the multifunction $\hat{\partial}^2\varphi(\cdot):\mathbb{R}^m \to \mathbb{R}^{m\times m}$ is well defined, which is given by
	\begin{equation*}
		\hat{\partial}^2\psi({\bm{u}}):=\nabla^2h^*({\bm{u}})+\sigma {\bm{A}}\partial{\rm{Prox}}_{\sigma \lambda \|\cdot\|_1}({\tilde{\bm{w}}}-\sigma {\bm{A}}^{\top}{\bm{u}}){\bm{A}}^{\top}+\gamma {\bm{1}}_m {\bm{1}}_m^{\top},\ \forall {\bm{u}}\in{{\rm{dom}}h^*},
	\end{equation*}
	where $\nabla^2h^*(\cdot)$ is defined in \eqref{def4} and $\partial{\rm{Prox}}_{\sigma \lambda \|\cdot\|_1}({\tilde{\bm{w}}}-\sigma {\bm{A}}^{\top}{\bm{u}})$ denotes the Clarke subdifferential \cite{CFH.1990} of the proximal mapping ${\rm{Prox}}_{\sigma \lambda \|\cdot\|_1}(\cdot)$ at ${\tilde{\bm{w}}}-\sigma {\bm{A}}^{\top}{\bm{u}}$. 
	Define ${\partial}^2\psi({\bm{u}})$ as  the Clarke generalized Jacobian of $\nabla\psi(\cdot)$ at ${\bm u}$. Then, it follows from \cite[Proposition 2.3.3 and Theorem 2.6.6]{CFH.1990} that  
	$$
	{\partial}^2\psi({\bm{u}}){\bm{d}}\subseteq\hat{\partial}^2\psi({\bm{u}}){\bm{d}},\ \forall {\bm{d}} \in \mathbb{R}^m.
	$$
	Let ${\bm{U}}\in\partial{\rm{Prox}}_{\sigma \lambda \|\cdot\|_1}({\tilde{\bm{w}}}-\sigma {\bm{A}}^{\top}{\bm{u}}) $. Then, we have
	\begin{equation*}
		{\bm{\mathcal{H}}}:=\nabla^2h^*({\bm{u}})+\sigma {\bm{A}}{\bm{U}}{\bm{A}}^{\top}+\gamma {\bf{1}}_m{\bf{1}}_m^{\top}\in \hat{\partial}^2\psi({\bm{u}}).
	\end{equation*}
	It can be found that $\nabla^2h^*(\cdot)$ is symmetric positive definite on ${{\rm{dom}}h^*}$ and hence  ${\bm{\mathcal{H}}}$ is symmetric positive definite on ${{\rm{dom}}h^*}$.
	
	Since $\nabla h^*(\cdot)$ and ${\rm{Prox}}_{\sigma \lambda \|\cdot\|_1}(\cdot)$ are both strongly semismooth functions, one knows that $\nabla \psi(\cdot)$ is strongly semismooth. Thus we shall design the semismooth Newton algorithm to solve problem \eqref{sub}, which is shown in Algorithm \ref{al2}.
	\begin{algorithm}[H]
		\caption{ {({\sc Ssn}) A semismooth Newton algorithm for solving problem \eqref{sub} }}\label{al2}	
		\begin{algorithmic}[1]
			\Require
			$\mu\in(0,1/2),\bar \tau\in (0,1)$ and $\bar\eta,\eta\in(0,1)$. Given an initial point ${{\bm{u}}^0\in{{\rm{dom}}h^*}}$, and set  $j=0$.
			
			\Ensure The approximate solution ${\bm{u}}^{j+1}$ of problem \eqref{sub}.
			
			\State Choose ${\bm{\mathcal{H}}}_j\in{\hat{\partial}}^2\psi({\bm{u}}^{j})$. Solve the following linear system 
			\begin{equation}\label{newton}
				\begin{split}
					{\bm{\mathcal{H}}}_j{\bm{d}}= -\nabla\psi({\bm{u}}^{j})
				\end{split}
			\end{equation}
			by the direct method or the conjugate gradient method such that the approximate solution ${\bm{d}}^j\in\mathbb{R}^m$ satisfies
			$$
			\|{\bm{\mathcal{H}}}_j{\bm{d}}^j+\nabla\psi({\bm{u}}^{j})\|\leq \min(\bar \eta,\|\nabla\psi({\bm{u}}^{j})\|^{1+\bar\tau}).
			$$
			
			\State Set $\alpha_j=\eta^{\bar{c}_j}$, where $\bar{c}_j$ is the smallest nonnegative integer $\bar{c}$ satisfying 
			$$
			{\bm{u}}^{j}+\eta^{\bar{c}}{\bm{d}}^{j}\in{{\rm{dom}}h^*},\ \psi({\bm{u}}^{j}+\eta^{\bar{c}}{\bm{d}}^{j})\leq\psi({\bm{u}}^{j})+\mu\eta^{\bar{c}}\left\langle\nabla\psi({\bm{u}}^{j}),{\bm{d}}^{j}\right\rangle.
			$$
			
			\State Update ${\bm{u}}^{j+1}={\bm{u}}^{j}+\alpha_j{\bm{d}}^{j},\ j\gets j+1$, and go to step $1$.
		\end{algorithmic}
	\end{algorithm}
	By virtue of \cite[Theorem 3.4 and 3.5]{Z.S.T.2010}, it is easy to obtain the following results on the convergence of the {\sc Ssn} algorithm.
	
	\begin{theorem}
		The sequence $\{{\bm{u}}^{j}\}$ generated by the {\sc Ssn} algorithm converges to the unique optimal solution ${\hat{\bm{u}}}$ of problem \eqref{sub}. Moreover, the rate  of convergence is at least superlinear with
		$$
		\|{\bm{u}}^{j+1}-{\hat{\bm{u}}}\|=\mathcal{O}(\|{\bm{u}}^{j}-{\hat{\bm{u}}}\|^{1+\bar{\tau}}),
		$$
		where $\bar{\tau}$ is given in the {\sc Ssn} algorithm.
	\end{theorem}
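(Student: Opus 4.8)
The plan is to verify that the {\sc Ssn} algorithm fits the framework of \cite[Theorems 3.4 and 3.5]{Z.S.T.2010}, whose hypotheses are that $\nabla\psi$ is strongly semismooth, that every element of the generalized Hessian used in the Newton system is symmetric positive definite, and that an Armijo line search is performed along the inexact Newton direction. All three ingredients have essentially been assembled already in this subsection, so the proof reduces to checking them carefully and then invoking the cited results.

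First I would record well-definedness and the descent property. Since $\psi(\cdot)$ is strongly convex and continuously differentiable, problem \eqref{sub} has the unique minimizer $\hat{\bm{u}}$, characterized by $\nabla\psi(\hat{\bm{u}})={\bm{0}}$. As shown above, each $\bm{\mathcal{H}}_j\in\hat{\partial}^2\psi(\bm{u}^j)$ is symmetric positive definite, so the linear system \eqref{newton} is solvable and the inexact solution $\bm{d}^j$ satisfies $\langle\nabla\psi(\bm{u}^{j}),\bm{d}^{j}\rangle<0$ whenever $\nabla\psi(\bm{u}^{j})\neq{\bm{0}}$; hence $\bm{d}^{j}$ is a genuine descent direction and the backtracking in Step $2$ terminates after finitely many trials. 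I would then establish global convergence: the line search forces $\{\psi(\bm{u}^{j})\}$ to decrease monotonically, and strong convexity keeps $\{\bm{u}^{j}\}$ in a bounded level set, so a standard argument shows that every accumulation point is a stationary point of $\psi$; by uniqueness this point must be $\hat{\bm{u}}$, and therefore the whole sequence converges to $\hat{\bm{u}}$.

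The crux is the local rate of convergence, and this is where I expect the main difficulty to lie. Near $\hat{\bm{u}}$, the positive definiteness of the generalized Hessian elements yields a uniform bound on $\|\bm{\mathcal{H}}_j^{-1}\|$, which together with the descent structure ensures that the unit step $\alpha_j=1$ is eventually accepted. Strong semismoothness of $\nabla\psi$ then supplies the estimate $\|\nabla\psi(\bm{u}^{j})-\bm{\mathcal{H}}_j(\bm{u}^{j}-\hat{\bm{u}})\|=\mathcal{O}(\|\bm{u}^{j}-\hat{\bm{u}}\|^{2})$. Writing $\bm{u}^{j+1}-\hat{\bm{u}}=\bm{\mathcal{H}}_j^{-1}\bigl[\bm{\mathcal{H}}_j(\bm{u}^{j}-\hat{\bm{u}})-\nabla\psi(\bm{u}^{j})\bigr]+\bm{\mathcal{H}}_j^{-1}\bm{r}^{j}$, where $\bm{r}^{j}:=\bm{\mathcal{H}}_j\bm{d}^{j}+\nabla\psi(\bm{u}^{j})$ is the inexactness residual bounded by $\|\nabla\psi(\bm{u}^{j})\|^{1+\bar\tau}$, I would bound the two terms separately. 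The delicate bookkeeping is to verify that the full step is admissible for all large $j$ and to track how the forcing term governs the final order: since $\nabla\psi(\hat{\bm{u}})={\bm{0}}$ and $\nabla\psi$ is Lipschitz, one has $\|\nabla\psi(\bm{u}^{j})\|^{1+\bar\tau}=\mathcal{O}(\|\bm{u}^{j}-\hat{\bm{u}}\|^{1+\bar\tau})$, and because $\bar\tau\in(0,1)$ this term dominates the $\mathcal{O}(\|\bm{u}^{j}-\hat{\bm{u}}\|^{2})$ contribution from semismoothness. Combining the two bounds with the uniform bound on $\|\bm{\mathcal{H}}_j^{-1}\|$ yields $\|\bm{u}^{j+1}-\hat{\bm{u}}\|=\mathcal{O}(\|\bm{u}^{j}-\hat{\bm{u}}\|^{1+\bar\tau})$, which is the claimed superlinear rate.
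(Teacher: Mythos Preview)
Your proposal is correct and follows exactly the paper's approach: the paper simply invokes \cite[Theorems 3.4 and 3.5]{Z.S.T.2010} without elaboration, whereas you have spelled out why the hypotheses of those theorems are satisfied (strong convexity of $\psi$, positive definiteness of each $\bm{\mathcal{H}}_j$, strong semismoothness of $\nabla\psi$, and the Armijo line search). Your write-up is thus more detailed than the paper's own one-line citation, but the route is the same.
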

	
	\subsection{Efficient Implementations of the {\sc Ssn} Algorithm}
	The computational cost of solving Newton linear equation \eqref{newton} in the {\sc Ssn} algorithm is expensive, especially when the dimension of the problem is large. In order to improve the efficiency of the {\sc Ssn} algorithm, in this subsection we shall further analyze the linear system and exploit some special structures to effectively reduce its computational cost.
	
	Given $({\tilde{\bm{w}}},{\bm{u}})\in \mathbb{R}^n\times\mathbb{R}^m$ and $\sigma,\gamma>0$,  we need to solve the following linear Newton system:
	\begin{equation}\label{2.15}
		(\nabla^2h^*({\bm{u}})+\sigma {\bm{AUA}}^{\top}+\gamma {\bm{1}}_m {\bm{1}}_m^{\top}){\bm{d}}=-\nabla\psi({\bm{u}}),
	\end{equation}
	where $\nabla^2h^*({\bm{u}})$ is defined in \eqref{def4} and ${\bm{U}}\in\partial{\rm{Prox}}_{\sigma \lambda \|\cdot\|_1}({\tilde{\bm{w}}}-\sigma {\bm{A}}^{\top}{\bm{u}})$. Since $\nabla^2h^*({\bm{u}})$ is a positive definite diagonal matrix, we use $\nabla^2h^*({\bm{u}})={\bm{LL}}^{\top}$ to denote the Cholesky decomposition of $\nabla^2h^*({\bm{u}})$, where $L$ is a positive definite diagonal matrix. Then, we reformulate the equations \eqref{2.15} equivalently as 
	\begin{equation}\label{2.16}
		({\bm {I}}_m+\sigma({\bm{L}}^{-1}{\bm{A}}){\bm{U}}({\bm{L}}^{-1}{\bm{A}})^{\top}+\gamma({\bm{L}}^{-1}{\bm{1}}_m)({\bm{L}}^{-1}{\bm{1}}_m)^{\top})({\bm{L}}^{\top}{\bm{d}})=-{\bm{L}}^{-1}\nabla \psi({\bm{u}}).
	\end{equation}
	Obviously, by the property of $\nabla^2h^*({\bm{u}})$, one knows that the cost of computing ${\bm{L}}$ and ${\bm{L}}^{-1}$ are both very low and almost negligible. For the convenience of later analysis, the linear system \eqref{2.16} can be rewritten as:
	\begin{equation}\label{2.17}
		({\bm{I}}_m+\sigma{\hat{\bm{A}}}{\bm{U}}{\hat{\bm{A}}}^{\top}+\gamma{\hat{\bm{1}}}_m{\hat{\bm{1}}}_m^{\top}){\hat{\bm{d}}}=-\nabla\hat{\psi}({\bm{u}}),
	\end{equation}
	where ${\hat{\bm{A}}}={\bm{L}}^{-1}{\bm{A}}\in\mathbb{R}^{m\times n},{\hat{\bm{1}}}_m={\bm{L}}^{-1}{\bm{1}_m}\in\mathbb{R}^{m}, {\hat{\bm{d}}}={\bm{L}}^{\top}{\bm{{d}}}\in\mathbb{R}^{m},\nabla\hat{\psi}({\bm{u}})={\bm{L}}^{-1}\nabla\psi({\bm{u}})\in\mathbb{R}^{m}$. The cost of naively computing ${\hat{\bm{A}}{\bm{U}}\hat{\bm{A}}}^{\top}$ is $\mathcal{O}(m^2n)$, while for any given ${\hat{\bm{d}}}\in \mathbb{R}^m$, the cost of computing ${\hat{\bm{A}}{\bm{U}}\hat{\bm{A}}}^{\top}{\hat{\bm{d}}}$ is $\mathcal{O}(mn)$. As the scale of problem \eqref{sub} continues to expand, the expensive computational cost makes some commonly used algorithms such as the Cholesky decomposition method or conjugate gradient method unsuitable for solving this linear system \eqref{2.17}. Thus we consider fully exploiting the second-order sparsity of problem \eqref{sub} to reduce the computational cost of linear systems. In our implementations, we choose the diagonal matrix ${\bm{U}}={\rm{Diag}}({\bm{\xi}})$, where the diagonal elements of the matrix ${\bm{U}}$ are given by
	\begin{equation*}
		{\bm{\xi}}_i:=\left\{
		\begin{aligned}
			0, &\quad  \ |{\tilde{\bm{w}}}-\sigma {\bm{A}}^{\top}{\bm{u}}|_i\leq \sigma\lambda, \\
			1, &\quad  \ |{\tilde{\bm{w}}}-\sigma{\bm{A}}^{\top}{\bm{u}}|_i> \sigma\lambda , \\
		\end{aligned} \right.\ \ i=1,\ldots,n.
	\end{equation*}
	Then, one can readily obtain that ${\bm{U}}\in\partial{\rm{Prox}}_{\sigma \lambda \|\cdot\|_1}({\tilde{\bm{w}}}-\sigma {\bm{A}}^{\top}{\bm{u}})$. Here we define the index set $\mathcal{J}:=\{j\ | \ {\bm{\xi}}_j=1,\ j=1,\ldots,n \}$.  Based on the special $0$-$1$ structure of ${\bm{U}}$, it holds that
	\begin{equation}\label{AA}
		{\hat{\bm{A}}}{\bm{U}}\hat{{\bm{A}}}^{\top}={\hat{\bm{A}}}_{\mathcal{J}}{\hat{\bm{A}}}_{\mathcal{J}}^{\top},
	\end{equation}
	where ${\hat{\bm{A}}}_{\mathcal{J}}\in\mathbb{R}^{m\times r}$ denotes the matrix consisting of the columns of ${\hat{\bm{A}}}$ indexed by ${\mathcal{J}}$ and $r$ denotes the cardinality of the set $\mathcal{J}$.  By this equivalent transformation \eqref{AA}, the computational cost of ${\hat{\bm{A}}}{\bm{U}}\hat{{\bm{A}}}^{\top}$ and ${\hat{\bm{A}}}{\bm{U}}\hat{{\bm{A}}}^{\top}{\bm{d}}$ for a given vector ${\bm{d}}$  can be reduced to $\mathcal{O}(m^2r)$ and $\mathcal{O}(mr)$. In particular, the total computational cost of solving the linear system by using the  Cholesky decomposition is reduced from $\mathcal{O}(m^2(m+n+1))$ to $\mathcal{O}(m^2(m+r+1))$. Furthermore, if $r$ is much smaller than $m$, we can use the Sherman-Morrison-Woodbury formula \cite{G.VCF.2013} to calculate the inverse of $({\bm{I}}_m+\sigma{\hat{\bm{A}}}{\bm{U}}\hat{{\bm{A}}}^{\top}+\gamma\hat{\bm{1}}_m\hat{\bm{1}}_m^{\top})$, which makes the computation cheaper. Let ${\bm{W}}=[{\hat{\bm{A}}}_{\mathcal{J}},\sqrt{{\gamma}/{\sigma}}{\bm{\hat{1}}}_m]\in \mathbb{R}^{m\times(r+1)}$,  we have 
	$$\begin{aligned}
		({\bm{I}}_m+\sigma{\hat{\bm{A}}}{\bm{U}}\hat{\bm A}^{\top}+\gamma{\hat{\bm{1}}}_m{\hat{\bm{1}}}_m^{\top})^{-1}&=({\bm{I}}_m+\sigma {\bm{WW}}^{\top})^{-1}\\
		&={\bm{I}}_m-{\bm{W}}(\sigma^{-1} {\bm{I}}_{r+1}+{\bm{W}}^{\top}{\bm{W}})^{-1}{\bm{W}}^{\top}.\end{aligned}
	$$
	From the above analysis, we only need to factorize a $(r+1)\times (r+1)$ matrix instead of a $m\times m$ matrix. Thus, the total computational cost of solving the linear system by the Cholesky decomposition  is reduced from  $\mathcal{O}(m^2(m+r+1))$ to $\mathcal{O}((r+1)^2(m+r+1))$.
	
	As a result, we greatly reduce the computational cost of  the Newton linear system \eqref{2.17} by exploiting the special structure of $\nabla^2h^*({\bm{u}})+\sigma {\bm{AUA}}^{\top}+\gamma {\bm{1}}_m {\bm{1}}_m^{\top}$.
	
	\section{An Adaptive Sieving Strategy}\label{sec:3}
	In this section, we develop an adaptive sieving strategy \cite{L.Y.S.2020,Y.C.S.2021} for generating solution paths of problem \eqref{2.1}. The main idea of the adaptive sieving strategy is to reduce the number of variables by using the constraints of the index set and obtain the optimal solution of the original problem \eqref{2.1} by solving the problem with smaller dimension. This strategy can  improve the efficiency of the algorithm by reducing the dimension of the problem.
	
	Our algorithm combines an adaptive sieving strategy with the PPDNA. We reduce the dimension of the problem via the adaptive sieving strategy and then apply the PPDNA to solve the problem with smaller dimension. The specific process of the adaptive sieving strategy is described in Algorithm \ref{al3}. For a given sequence of hyper-parameter, we first solve problem \eqref{3.2} with a larger $\lambda^1$ under the constraints of a reasonable initial index set, and update the constrained index set according to the KKT residuals until an approximate optimal solution that satisfies the KKT conditions is obtained. Then, for the next smaller $\lambda^2$, we solve problem \eqref{3.2} under the constraints of the index set corresponding to the non-zero solution of the previous problem. By analogy, we can get a solution path of problem \eqref{2.1} by performing this process on the following hyper-parameters.
	
	Next, we shall explain problems \eqref{3.2} and \eqref{3.3} in Algorithm \ref{al3}, and then analyze the construction of $J^{l+1}(\lambda^i)$ in step $6$ of Algorithm \ref{al3}. In Algorithm \ref{al3}, the KKT residual functions ${Res}_\lambda(\cdot)$ and ${Res}^{I^0}_{\lambda^i}(\cdot)$  are respectively defined by
	\begin{equation*}
		\begin{split}
			{Res}_\lambda({\bm{w}},v,{\bm{y}},{\bm{u}}):=\max\{&\|\nabla h({\bm{y}})-{\bm{u}}\|,\|{\bm{w}}-{\rm{Prox}}_{\lambda \|\cdot\|_1}({\bm{w}}-{\bm{A}}^{\top}{\bm{u}})\|, \\
			&\|{\bm{y}}-{\bm{Aw}}-v{\bm{1}}_m\|, |{\bm{u}}^{\top}{\bm{1}}_m|\}.\\
			{Res}^{I^0}_{\lambda^i}({\bm{z}},v,{\bm{y}},{\bm{u}}):=\max\{&\|\nabla h({\bm{y}})-{\bm{u}}\|,\|{\bm{z}}-{\rm{Prox}}_{\lambda^i \|\cdot\|_1}({\bm{z}}-{({\bm{A}}_{{I^0}(\lambda^i)})}^{\top}{\bm{u}})\|, \\
			&\|{\bm{y}}-{\bm{A}}_{{I^0}(\lambda^i)}{\bm{z}}-v{\bm{1}}_m\|, |{\bm{u}}^{\top}{\bm{1}}_m|\}.
		\end{split}
	\end{equation*}
	If  $({\bm{w}}^*,v^*,{\bm{y}}^*,{\bm{u}}^*)$ satisfies ${Res}_\lambda({\bm{w}}^*,v^*,{\bm{y}}^*,{\bm{u}}^*)\leq\epsilon$, we can also accept $ ({\bm{w}}^*,v^*)$  as the approximate optimal solution to the problem \eqref{2.1}.
	Taking problem \eqref{3.2} in Algorithm \ref{al3} as an example, in fact, we consider the following constrained optimization problem:
	\begin{equation}\label{3.4}
		\begin{split}
			\mathop{\min}\limits_{{\bm{w}}\in\mathbb{R}^n,v\in\mathbb{R}} &  h({\bm{Aw}}+v{\bm{1}}_m)+\lambda^i\|{\bm{w}}\|_1\\
			\  \ \mbox{s.t.}  \quad & {\bm{w}}_{\bar{I}^{0}(\lambda^i)}={\bm{0}}.
		\end{split}
	\end{equation}
	
	With the constraint $ {\bm{w}}_{\bar{I}^{0}(\lambda^i)}={\bm{0}}$, problem \eqref{3.4} can be transformed into the following problem with smaller size:
	\begin{equation}\label{3.5}
		\begin{split}
			\mathop{\min}\limits_{{\bm{z}}\in\mathbb{R}^{|{I}^{0}(\lambda^i)|},v\in\mathbb{R}}&   h({\bm{A}}_{{I}^{0}(\lambda^i)}{\bm{z}}+v{\bm{1_m}})+\lambda^i\|{\bm{z}}\|_1,
		\end{split}
	\end{equation}
	where ${\bm{A}}_{{I}^{0}(\lambda^i)}$ is the matrix consisting of the columns of ${\bm{A}}$ indexed by $ {{I}^{0}(\lambda^i)}$. 
	Since the problem \eqref{3.5} is of the same form as problem \eqref{2.1}, we can apply the efficient PPDNA to solve problem \eqref{3.5}. Obviously, it can greatly reduce the cost of computation and save storage space. The approximate optimal solution ${\bm{z}}^{0}(\lambda^i)$ of problem \eqref{3.5} can be extended to the approximate optimal solution ${\bm{w}}^{0}(\lambda^i)$ of problem \eqref{3.4} as follows:
	\begin{equation*}
		{\bm{w}}^{0}(\lambda^i)_{I^0(\lambda^i)}={\bm{z}}^{0}(\lambda^i),\ {\bm{w}}^{0}(\lambda^i)_{\bar{I}^0(\lambda^i)}={\bm{0}}.
	\end{equation*}
	
	\begin{algorithm}[p]
		\caption{ Adaptive sieving strategy for solving problem \eqref{2.1}}	\label{al3}
		\begin{algorithmic}[1]
			\Require Given a sequence of hyper-parameter: $\lambda^1>\ldots>\lambda^t>0$, and  tolerance $\epsilon>0$.
			
			\Ensure A solution path: $({\bm{w}}^*(\lambda^1),v^*(\lambda^1)),\ldots, ({\bm{w}}^*(\lambda^t),v^*(\lambda^t)).$
			
			\State {\bf{Initialization:}} 
			Generate an initial index set $ I^*(\lambda^0)$ by a screening rule.
			
			\For{$i=1,2,\ldots,t$} 
			
			\State Let $I^0(\lambda^i)=I^*(\lambda^{i-1}).$ Solving
			\begin{equation}\label{3.2}
				\mathop {\min }\limits_{{\bm{z}}\in\mathbb{R}^{|I^0(\lambda^i)|},v\in\mathbb{R}} \left\{h({\bm{A}}_{I^0(\lambda^i)}{\bm{z}}+v{\bf{1}}_m)+\lambda^i\|{\bm{z}}\|_1\right\}
			\end{equation}
			such that ${Res}^{I^0}_{\lambda^i}({\bm{z}}^{0}(\lambda^i),v^{0}(\lambda^i),{\bm{y}}^{0}(\lambda^i),{\bm{u}}^{0}(\lambda^i))\leq\epsilon/\sqrt{2},$ where the approximate optimal solution $({\bm{z}}^{0}(\lambda^i),v^{0}(\lambda^i),{\bm{y}}^{0}(\lambda^i),{\bm{u}}^{0}(\lambda^i))$ can be obtained by PPDNA.  Extend  ${\bm{z}}^{0}(\lambda^i)$ to ${\bm{w}}^{0}(\lambda^i)$ by
			$$
			{\bm{w}}^{0}(\lambda^i)_{I^0(\lambda^i)}={\bm{z}}^{0}(\lambda^i),\ {\bm{w}}^{0}(\lambda^i)_{\bar{I}^0(\lambda^i)}={\bm{0}},
			$$
			where $\bar{I}^0(\lambda^i)$ denotes the complement of ${I}^0(\lambda^i)$.
			
			\State Compute ${Res}_{\lambda^i}({\bm{w}}^0(\lambda^i),v^0(\lambda^i),{\bm{y}}^0(\lambda^i),{\bm{u}}^0(\lambda^i))$ and set $l=0$.
			
			\While{${Res}_{\lambda^i}({\bm{w}}^l(\lambda^i),v^l(\lambda^i),{\bm{y}}^l(\lambda^i),{\bm{u}}^l(\lambda^i))>\epsilon$}
			
			\State Create $J^{l+1}(\lambda^i)$:
			$$
			J^{l+1}(\lambda^i)=\left\{j\in\bar{I}^l(\lambda^i)\ | \ -({\bm{A}}^{\top}{\bm{u}}^l(\lambda^i))_j\notin \left(\partial\lambda^i \|{\bm{w}}^l(\lambda^i)\|_1+ \frac{\epsilon}{\sqrt{2|\bar{I}^l(\lambda^i)|}}\mathcal{B}_{\infty}\right) _j \right\},
			$$
			where $\bar{I}^l(\lambda^i)$ denotes the complement of ${I}^l(\lambda^i)$ and $(\mathcal{D})_j$ represents the projection of the set $\mathcal{D}$ onto the $j$-th dimension. 
			
			\State Update $I^{l+1}\gets I^{l}\cup J^{l+1}$.
			
			\State Solve the following problem:
			\begin{equation}\label{3.3}
				\mathop {\min }\limits_{{\bm{z}}\in\mathbb{R}^{|I^{l+1}(\lambda^i)|},v\in\mathbb{R}} \left\{h({\bm{A}}_{I^{l+1}(\lambda^i)}{\bm{z}}+v{\bm{1}}_m)+\lambda^i\|{\bm{z}}\|_1\right\}
			\end{equation}
			such that ${Res}^{I^{l+1}}_{\lambda^i}({\bm{z}}^{l+1}(\lambda^i),v^{l+1}(\lambda^i),{\bm{y}}^{l+1}(\lambda^i),{\bm{u}}^{l+1}(\lambda^i))\leq\epsilon/\sqrt{2},$ where the approximate optimal solution $({\bm{z}}^{l+1}(\lambda^i),v^{l+1}(\lambda^i),{\bm{y}}^{l+1}(\lambda^i),{\bm{u}}^{l+1}(\lambda^i))$ is obtained by PPDNA. Extend ${\bm{z}}^{l+1}(\lambda^i)$ to ${\bm{w}}^{l+1}(\lambda^i)$ by 
			$$
			{\bm{w}}^{l+1}(\lambda^i)_{I^{l+1}(\lambda^i)}={\bm{z}}^{l+1}(\lambda^i),\ {\bm{w}}^{l+1}(\lambda^i)_{\bar{I}^{l+1}(\lambda^i)}={\bm{0}}.
			$$
			
			\State Compute ${Res}_{\lambda^i}({\bm{w}}^{l+1}(\lambda^i),v^{l+1}(\lambda^i),{\bm{y}}^{l+1}(\lambda^i),{\bm{u}}^{l+1}(\lambda^i))$ and set $l\gets {l+1}$.
			
			\EndWhile
			
			\State Set $({\bm{w}}^*(\lambda^i),v^*(\lambda^i),{\bm{y}}^*(\lambda^i),{\bm{u}}^*(\lambda^i))= ({\bm{w}}^l(\lambda^i),v^l(\lambda^i),{\bm{y}}^l(\lambda^i),{\bm{u}}^l(\lambda^i)),I^{*}(\lambda^i)= I^{l}(\lambda^i).$
			
			\EndFor
		\end{algorithmic}
	\end{algorithm}

	Now, we interpret the connection between the optimal solutions of problems \eqref{3.5} and \eqref{2.1}. Note that problem \eqref{3.5} is equivalent to the following one:
	\begin{equation}\label{eq3.5}
		\begin{aligned}
			\mathop{\min}\limits_{{\bm{z}}\in\mathbb{R}^{|{I}^{0}(\lambda^i)|},v\in\mathbb{R}, {\bm{y}}\in\mathbb{R}^m}&   h({\bm{y}})+\lambda^i\|{\bm{z}}\|_1\\
			\mbox{s.t.}\qquad \quad & {\bm{y}}={\bm{A}}_{{I}^{0}(\lambda^i)}{\bm{z}}+v{\bf{1}}_m,
		\end{aligned}
	\end{equation}
	one can obtain the KKT conditions of problem \eqref{eq3.5} as follows:
	\begin{equation}\label{3.6}
		\left\{
		\begin{aligned}
			&\nabla h({\bm{y}})-{\bm{u}}={\bm{0}},  \\
			&-({\bm{A}}_{I^0(\lambda^i)})^{\top}{\bm{u}}\in \partial\lambda^i \|{\bm{z}}\|_1,  \\
			&{\bm{y}}-{\bm{A}}_{I^0(\lambda^i)}{\bm{z}}-v{\bm{1}}_m={\bm{0}},\\
			&{\bm{u}}^{\top}{\bm{1}}_m=0.
		\end{aligned}\right.
	\end{equation}
	With ${\bm{w}}_{I^0(\lambda^i)}={\bm{z}},\ {\bm{w}}_{\bar{I}^0(\lambda^i)}={\bm{0}},$  the KKT conditions \eqref{3.6} can be rewritten as 
	\begin{equation}\label{3.7}
		\left\{
		\begin{aligned}
			&\nabla h({\bm{y}})-{\bm{u}}={\bm{0}},  \\
			&-({\bm{A}}^{\top}{\bm{u}})_{I^0(\lambda^i)}\in (\partial\lambda^i \|{\bm{w}}\|_1)_{I^0(\lambda^i)},  \\
			& {\bm{y}}-{\bm{A}}{\bm{w}}-v{\bm{1}}_m={\bm{0}},\\
			&{\bm{u}}^{\top}{\bf{1}}_m=0.
		\end{aligned}\right.
	\end{equation}
	Meanwhile, the equivalent KKT conditions \eqref{2.3} of problem \eqref{2.1} can be equivalently written as:
	\begin{equation}\label{3.8}
		\left\{
		\begin{aligned}
			&\nabla h({\bm{y}})-{\bm{u}}={\bm{0}},  \\
			&-({\bm{A}}^{\top}{\bm{u}})_{I^0(\lambda^i)}\in (\partial\lambda^i \|{\bm{w}}\|_1)_{I^0(\lambda^i)},  \\
			&-({\bm{A}}^{\top}{\bm{u}})_{\bar{I}^0(\lambda^i)}\in (\partial\lambda^i \|{\bm{w}}\|_1)_{\bar{I}^0(\lambda^i)},  \\
			& {\bm{y}}-{\bm{Aw}}-v{\bf{1}}_m={\bm{0}},\\
			&{\bm{u}}^{\top}{\bm{1}}_m=0.
		\end{aligned}\right.
	\end{equation}
	It is clear that the KKT systems  \eqref{3.7} and \eqref{3.8} differ by the condition 
	$$
	-({\bm{A}}^{\top}{\bm{u}})_{\bar{I}^0(\lambda^i)}\in (\partial\lambda^i \|{\bm{w}}\|_1)_{\bar{I}^0(\lambda^i)},
	$$ 
	which indicates  that the optimal solution to problem \eqref{3.5} indexed by the index set ${I}^0(\lambda^i)$ may not satisfy this condition. Therefore, in order to obtain the optimal solution to problem \eqref{2.1}, we need to use this condition to construct the index set  of step $6$ in Algorithm \ref{al3} and update the original index set ${I}^0(\lambda^i)$.
	
	Next, we further prove the connection between the residual function ${Res}_{\lambda^i}(\cdot)$  and the index set $J^{l+1}(\lambda^i)$ and establish the convergence result of Algorithm \ref{al3}.
	
	\begin{theorem}
		The solution path $({\bm{w}}^*(\lambda^1),v^*(\lambda^1)),\ldots,({\bm{w}}^*(\lambda^t),v^*(\lambda^t))$ obtained by Algorithm \ref{al3} are the approximate optimal solutions of problem $(P_{\lambda^1}),\ldots,(P_{\lambda^t})$, i.e., for the optimal solution  $({\bm{w}}^*(\lambda^i),v^*(\lambda^i))$ and  ${\bm{y}}^*(\lambda^i)$, there exists  ${\bm{u}}^*(\lambda^i)$ satisfying
		$$
		{Res}_{\lambda^i}({\bm{w}}^*(\lambda^i),v^*(\lambda^i),{\bm{y}}^*(\lambda^i),{\bm{u}}^*(\lambda^i))\leq\epsilon,\ \forall i=1,\ldots,t.
		$$
	\end{theorem}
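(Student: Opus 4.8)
The plan is to fix an index $i\in\{1,\ldots,t\}$ and prove that the inner \textbf{while} loop of Algorithm~\ref{al3} terminates after finitely many steps. Since the loop exits precisely when ${Res}_{\lambda^i}(\cdots)\le\epsilon$, the returned iterate $({\bm{w}}^*(\lambda^i),v^*(\lambda^i),{\bm{y}}^*(\lambda^i),{\bm{u}}^*(\lambda^i))$ then automatically satisfies the asserted bound, and ranging over $i=1,\ldots,t$ yields the whole solution path. The crux of the argument is to show that whenever the loop does \emph{not} exit, i.e. the current full residual exceeds $\epsilon$, the freshly created set $J^{l+1}(\lambda^i)$ is nonempty, so that $I^{l+1}=I^l\cup J^{l+1}$ grows strictly.

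First I would decompose the full KKT residual in terms of the reduced quantities delivered by the PPDNA. Because ${\bm{y}}^l(\lambda^i)$ is assigned through the closed form \eqref{aux}, the term $\|\nabla h({\bm{y}}^l)-{\bm{u}}^l\|$ vanishes exactly. Since the extension sets ${\bm{w}}^l_{\bar I^l(\lambda^i)}={\bm{0}}$, we have ${\bm{A}}{\bm{w}}^l={\bm{A}}_{I^l(\lambda^i)}{\bm{z}}^l$, so both $\|{\bm{y}}^l-{\bm{A}}{\bm{w}}^l-v^l{\bm{1}}_m\|$ and $|({\bm{u}}^l)^{\top}{\bm{1}}_m|$ coincide with their reduced counterparts and hence are bounded by ${Res}^{I^l}_{\lambda^i}(\cdots)\le\epsilon/\sqrt2$. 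The only term that changes is the proximal residual $\|{\bm{w}}^l-{\rm{Prox}}_{\lambda^i\|\cdot\|_1}({\bm{w}}^l-{\bm{A}}^{\top}{\bm{u}}^l)\|$, which I would split coordinatewise: on $I^l(\lambda^i)$ it equals the reduced proximal residual (so its squared norm is $\le\epsilon^2/2$), while on $\bar I^l(\lambda^i)$, where ${\bm{w}}^l_j=0$, its $j$-th entry equals $|{\rm{Prox}}_{\lambda^i|\cdot|}(-({\bm{A}}^{\top}{\bm{u}}^l)_j)|$.

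The key estimate links this $\bar I^l$-block to the defining condition of $J^{l+1}(\lambda^i)$. For each $j\in\bar I^l(\lambda^i)\setminus J^{l+1}(\lambda^i)$ one has $-({\bm{A}}^{\top}{\bm{u}}^l)_j\in(\partial\lambda^i\|{\bm{w}}^l\|_1+\tfrac{\epsilon}{\sqrt{2|\bar I^l(\lambda^i)|}}\mathcal{B}_\infty)_j$; because ${\bm{w}}^l_j=0$ forces $(\partial\lambda^i\|{\bm{w}}^l\|_1)_j=\lambda^i[-1,1]$, this gives $|({\bm{A}}^{\top}{\bm{u}}^l)_j|\le\lambda^i+\tfrac{\epsilon}{\sqrt{2|\bar I^l(\lambda^i)|}}$ and therefore $|{\rm{Prox}}_{\lambda^i|\cdot|}(-({\bm{A}}^{\top}{\bm{u}}^l)_j)|\le\tfrac{\epsilon}{\sqrt{2|\bar I^l(\lambda^i)|}}$. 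Consequently, if $J^{l+1}(\lambda^i)=\emptyset$, summing these coordinatewise bounds over all $|\bar I^l(\lambda^i)|$ indices shows the $\bar I^l$-block has norm at most $\sqrt{|\bar I^l(\lambda^i)|\cdot\epsilon^2/(2|\bar I^l(\lambda^i)|)}=\epsilon/\sqrt2$; combining with the $I^l$-block through the Pythagorean splitting of the squared norm over disjoint coordinates bounds the full proximal residual by $\epsilon$, while all remaining terms are $\le\epsilon/\sqrt2$, so ${Res}_{\lambda^i}(\cdots)\le\epsilon$ --- contradicting that the loop continued. Hence $J^{l+1}(\lambda^i)\neq\emptyset$ on every continuing iteration.

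Finally I would close with a counting argument: every continuing iteration strictly enlarges $I^l(\lambda^i)\subseteq\{1,\ldots,n\}$, so the loop can repeat at most $n$ times before either ${Res}_{\lambda^i}(\cdots)\le\epsilon$ holds or $I^l(\lambda^i)=\{1,\ldots,n\}$; in the latter case the reduced problem coincides with problem \eqref{2.1}, whence $\bar I^l(\lambda^i)=\emptyset$ and the criterion ${Res}^{I^l}_{\lambda^i}(\cdots)\le\epsilon/\sqrt2$ already forces ${Res}_{\lambda^i}(\cdots)\le\epsilon/\sqrt2<\epsilon$. In either case the loop terminates with the required bound, which proves the claim for every $i$. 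The step I expect to be the main obstacle is the coordinatewise calibration above: the slack $\tfrac{\epsilon}{\sqrt{2|\bar I^l(\lambda^i)|}}$ in the definition of $J^{l+1}(\lambda^i)$ must be tracked carefully so that aggregating over all of $\bar I^l(\lambda^i)$ reproduces exactly the half-budget $\epsilon/\sqrt2$, which is what lets the $I^l$- and $\bar I^l$-blocks combine into the single threshold $\epsilon$.
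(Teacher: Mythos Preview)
Your proposal is correct and follows essentially the same route as the paper's proof: show by contradiction that $J^{l+1}(\lambda^i)\neq\emptyset$ whenever ${Res}_{\lambda^i}>\epsilon$ by splitting the proximal residual into $I^l$- and $\bar I^l$-blocks, bounding each by $\epsilon/\sqrt{2}$, and combining via Pythagoras, then conclude termination from the finiteness of $\{1,\ldots,n\}$. The only cosmetic difference is that on the $\bar I^l$-block the paper introduces a perturbation vector $\hat{\bm{\delta}}$ and invokes the nonexpansiveness of ${\rm{Prox}}_{\lambda^i\|\cdot\|_1}$, whereas you compute the scalar soft-threshold directly; both yield the same $\epsilon/\sqrt{2}$ bound.
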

	\begin{proof}
		For given $i\in\{1,\ldots,t\}$, we first prove that $J^{l+1}(\lambda^i)$ is not empty if ${Res}_{\lambda^i}({\bm{w}}^l(\lambda^i),v^l(\lambda^i),{\bm{y}}^l(\lambda^i),{\bm{u}}^l(\lambda^i))>\epsilon$. We prove it by contradiction. Suppose for the purpose of contrary that $J^{l+1}(\lambda^i)$ is the empty set, i.e., 
		$$
		-({\bm{A}}^{\top}{\bm{u}}^l(\lambda^i))_j\in (\partial\lambda^i \|{\bm{w}}^l(\lambda^i)\|_1)_j+( \frac{\epsilon}{\sqrt{2|\bar{I}^l(\lambda^i)|}}\mathcal{B}_{\infty})_j,\ \forall j\in\bar{I}^l(\lambda^i),
		$$
		which implies that there exists a vector $\hat{{\bm{\delta}}}^l_i\in\mathbb{R}^{|{\bar{I}^{l}(\lambda^i)}|}$ with $\|\hat{{\bm{\delta}}}^l_i\|_{\infty}\leq\frac{\epsilon}{\sqrt{2|\bar{I}^l(\lambda^i)|}}$ such that 
		\begin{equation*}
			-({\bm{A}}^{\top}{\bm{u}}^l(\lambda^i))_j+\hat{{\bm{\delta}}}^l_i\in (\partial\lambda^i \|{\bm{w}}^l(\lambda^i)\|_1)_j,\ \forall j\in\bar{I}^l(\lambda^i).
		\end{equation*}
		This indicates that 
		$$
		({\bm{w}}^l(\lambda^i))_{\bar{I}^l(\lambda^i)}={\rm{Prox}}_{\lambda^i \|\cdot\|_1}(({\bm{w}}^l(\lambda^i))_{{\bar{I}^l(\lambda^i)}}-\hat{{\bm{\delta}}}^l_i-({\bm{A}}^{\top}{\bm{u}}^l(\lambda^i))_{\bar{I}^l(\lambda^i)}).
		$$
		Thus, we have 
		\begin{equation}\label{IDE}
			\begin{split}
				&\quad\ \|({\bm{w}}^{l}(\lambda^i)-{\rm{Prox}}_{\lambda^i \|\cdot\|_1}({\bm{w}}^l(\lambda^i)-{\bm{A}}^{\top}{\bm{u}}^{l}(\lambda^i)))_{\bar{I}^{l}(\lambda^i)}\|\\
				&= \|{\rm{Prox}}_{\lambda^i \|\cdot\|_1}(({\bm{w}}^l(\lambda^i))_{{\bar{I}^l(\lambda^i)}}-\hat{{\bm{\delta}}}^l_i-({\bm{A}}^{\top}{\bm{u}}^l(\lambda^i))_{\bar{I}^l(\lambda^i)})\\
				&\quad \quad-{\rm{Prox}}_{\lambda^i \|\cdot\|_1}(({\bm{w}}^l(\lambda^i))_{{\bar{I}^l(\lambda^i)}}-({\bm{A}}^{\top}{\bm{u}}^l(\lambda^i))_{\bar{I}^l(\lambda^i)})\|\\
				&\leq \|\hat{{\bm{\delta}}}^l_i\|\leq \sqrt{\frac{\epsilon^2}{2|{\bar{I}^{l}(\lambda^i)}|}|{\bar{I}^{l}(\lambda^i)}|}=\epsilon/\sqrt{2}.
			\end{split}
		\end{equation}
		Note that $({\bm{z}}^{l}(\lambda^i),v^{l}(\lambda^i))$ is an approximate optimal solution to the following problem:
		\begin{equation*}
			\min \limits_{{\bm{z}}\in\mathbb{R}^{|I^{l}(\lambda^i)|},v\in\mathbb{R}} \left\{h({\bm{A}}_{I^{l}(\lambda^i)}{\bm{z}}+v{\bm{1}}_m)+\lambda^i\|{\bm{z}}\|_1\right\},
		\end{equation*}
		which  satisfies
		${Res}^{I^l}_{\lambda^i}({\bm{z}}^{l}(\lambda^i),v^{l}(\lambda^i),{\bm{y}}^{l}(\lambda^i),{\bm{u}}^{l}(\lambda^i))\leq\epsilon/\sqrt{2}$. Thus, one has 
		\begin{equation}\label{3.9}
			\left\{
			\begin{aligned}
				&\|\nabla h({\bm{y}}^{l}(\lambda^i))-{\bm{u}}^{l}(\lambda^i)\|\leq\epsilon/\sqrt{2},  \\
				& \|{\bm{z}}^{l}(\lambda^i)-{\rm{Prox}}_{\lambda^i \|\cdot\|_1}({\bm{z}}^l(\lambda^i)-({\bm{A}}_{I^{l}(\lambda^i)})^{\top}{\bm{u}}^{l}(\lambda^i))\|\leq \epsilon/\sqrt{2},\\
				&\|{\bm{y}}^{l}(\lambda^i)-{\bm{A}}_{I^{l}(\lambda^i)}{\bm{z}}^{l}(\lambda^i)-v^{l}(\lambda^i){\bm{1}}_m\|\leq\epsilon/\sqrt{2},\\
				&\|{\bm{u}}^{l}(\lambda^i)^{\top}{\bm{1}}_m\|\leq\epsilon/\sqrt{2}.
			\end{aligned}\right.
		\end{equation}
		Recall that ${\bm{w}}^{l}(\lambda^i)_{I^l(\lambda^i)}={\bm{z}}^{l}(\lambda^i),\ {\bm{w}}^{l}(\lambda^i)_{\bar{I}^{l}(\lambda^i)}={\bm{0}}$, we obtain that the second condition of \eqref{3.9} implies 
		\begin{equation*}
			\|({\bm{w}}^{l}(\lambda^i)-{\rm{Prox}}_{\lambda^i \|\cdot\|_1}({\bm{w}}^l(\lambda^i)-{\bm{A}}^{\top}{\bm{u}}^{l}(\lambda^i)))_{I^{l}(\lambda^i)}\|\leq \epsilon/\sqrt{2}.
		\end{equation*}
		Therefore, invoking \eqref{IDE}, we have
		\begin{equation*}
			\begin{aligned}
				&\ \quad \|{\bm{w}}^{l}(\lambda^i)-{\rm{Prox}}_{\lambda^i \|\cdot\|_1}({\bm{w}}^l(\lambda^i)-{\bm{A}}^{\top}{\bm{u}}^{l}(\lambda^i))\|^2\\
				&=\|({\bm{w}}^{l}(\lambda^i)-{\rm{Prox}}_{\lambda^i \|\cdot\|_1}({\bm{w}}^l(\lambda^i)-{\bm{A}}^{\top}{\bm{u}}^{l}(\lambda^i)))_{I^{l}(\lambda^i)}\|^2\\
				&\ \quad +\|({\bm{w}}^{l}(\lambda^i)-{\rm{Prox}}_{\lambda^i \|\cdot\|_1}({\bm{w}}^l(\lambda^i)-{\bm{A}}^{\top}{\bm{u}}^{l}(\lambda^i)))_{\bar{I}^{l}(\lambda^i)}\|^2\\
				&\leq \frac{\epsilon^2}{2}+\frac{\epsilon^2}{2}=\epsilon^2,
			\end{aligned}
		\end{equation*}
		which implies that
		\begin{equation*}
			\|{\bm{w}}^{l}(\lambda^i)-{\rm{Prox}}_{\lambda^i \|\cdot\|_1}({\bm{w}}^l(\lambda^i)-{\bm{A}}^{\top}{\bm{u}}^{l}(\lambda^i))\|\leq \epsilon.
		\end{equation*}
		Combining with  $({\bm{w}}^{l}(\lambda^i))_{\bar{I}^{l}(\lambda^i)}={\bm{0}}$, we obtain  
		$$
		\|{\bm{y}}^{l}(\lambda^i)-{\bm{A}}{\bm{w}}^{l}(\lambda^i)-v^{l}(\lambda^i){\bf{1}}_m\|=\|{\bm{y}}^{l}(\lambda^i)-{\bm{A}}_{I^{l}(\lambda^i)}{\bm{z}}^{l}(\lambda^i)-v^{l}(\lambda^i){\bm{1}}_m\|\leq\epsilon/\sqrt{2}.
		$$ 
		As a result, it holds that
		\begin{equation*}
			\left\{
			\begin{aligned}
				&\|\nabla h({\bm{y}}^{l}(\lambda^i))-{\bm{u}}^{l}(\lambda^i)\|\leq\epsilon/\sqrt{2}<\epsilon,  \\
				& \|{\bm{w}}^{l}(\lambda^i)-{\rm{Prox}}_{\lambda^i \|\cdot\|_1}({\bm{w}}^l(\lambda^i)-{\bm{A}}^{\top}{\bm{u}}^{l}(\lambda^i))\|\leq\epsilon, \\
				&\|{\bm{y}}^{l}(\lambda^i)-{\bm{A}}{\bm{w}}^{l}(\lambda^i)-v^{l}(\lambda^i){\bm{1}}_m\|\leq\epsilon/\sqrt{2}<\epsilon,\\
				&\|{\bm{u}}^{l}(\lambda^i)^{\top}{\bm{1}}_m\|\leq\epsilon/\sqrt{2}<\epsilon.
			\end{aligned}\right.
		\end{equation*}
		It means that $Res_{\lambda^i}({\bm{w}}^l(\lambda^i),v^l(\lambda^i),{\bm{y}}^l(\lambda^i),{\bm{u}}^l(\lambda^i))\leq\epsilon$ holds.
		Hence, we know that $J^{l+1}(\lambda^i)$ is not empty as long as $Res_{\lambda^i}({\bm{w}}^l(\lambda^i),v^l(\lambda^i),{\bm{y}}^l(\lambda^i),{\bm{u}}^l(\lambda^i))>\epsilon$. Since the total number of components of ${\bm{w}}$ is finite, Algorithm \ref{al3} must terminate after a finite number of iterations. Therefore, it follows directly from the convergence of Algorithm \ref{al3} that for $i=1,\dots,t$,
		$$
		{Res}_{\lambda^i}({\bm{w}}^*(\lambda^i),v^*(\lambda^i),{\bm{y}}^*(\lambda^i),{\bm{u}}^*(\lambda^i))= {Res}_{\lambda^i}({\bm{w}}^l(\lambda^i),v^l(\lambda^i),{\bm{y}}^l(\lambda^i)\leq \epsilon,
		$$
		which implies that the solution path $({\bm{w}}^*(\lambda^1),v^*(\lambda^1)),\ldots,({\bm{w}}^*(\lambda^t),v^*(\lambda^t))$ obtained by Algorithm \ref{al3} are approximate optimal solutions of problems $(P_{\lambda^1}),\ldots,(P_{\lambda^t})$, respectively. Here, we complete the proof.  
	\end{proof}

	\section{Numerical Experiments}\label{sec:4}
	In this section, we perform numerical experiments on the PPDNA and the AS strategy with the PPDNA for solving the $\ell_1$-regularized logistic regression problem \eqref{1.1} with bias term on random and real data sets. Firstly, we compare the PPDNA  with  the improved GLMNET (newGLMNET) method \cite{Y.H.L.2012}, the inexact regularized proximal Newton (IRPN) method \cite{Y.Z.S.2019},  and  the proximal Newton-type (PNT) method \cite{M.Y.Z.2022}. To demonstrate the numerical performance of the adaptive sieving strategy for generating solution paths of problem \eqref{1.1}, we test the AS strategy with the PPDNA, the newGLMNET method, the IRPN method, and the PNT method, respectively.  All our experiments are executed in MATLAB R2019a  on a Dell desktop  computer with Intel(R) Core(TM) i5-9500 CPU @ 3.00GHz  and 4.00 GB RAM.
	
	 The codes for the newGLMNET method and  the IRPN method  are collected from github\footnote{\url{https://github.com/ZiruiZhou/IRPN.}}. The code of the PNT  method can be obtained by modifying the code of the IRPN method. Note that the codes for the comparison algorithms are used to solve the $\ell_1$-regularization problem without bias term, while our goal is to solve the $\ell_1$-regularized logistic regression problem with the bias term $v$. For convenience, when applying the codes of these three comparison algorithms to solve the problem \eqref{1.1},  one may extend each instance with an additional dimension to eliminate this term \cite{C.L.L.2020,Y.T.2011}:
	\begin{equation*}
		\bar{{\bm{w}}}\gets \left[\begin{array}{c}
			{\bm{w}}\\
			v\\
		\end{array}\right] ,\ \bar{{\bm{a}}}_i\gets \left[
		\begin{array}{c}
			{\bm{a}}_i\\
			1\\
		\end{array}\right],\ i=1,\ldots,m, \ \bar{\bm{\lambda}}=\lambda \bar{\bm e},
	\end{equation*}
	where $\bar{\bm e}=[1,\ldots,1,0]^{\top}\in\mathbb{R}^{n+1}$. By the above transformations, the  problem \eqref{1.1}  is equivalent to the following one:
	\begin{equation}\label{4.2}
		\mathop {\min }\limits_{\bar{{\bm{w}}}\in \mathbb{R}^{n+1}}\ \left\{F(\bar{{\bm{w}}}):= \frac{1}{m}\sum_{i=1}^{m}\log(1+\exp(-{\bm{b}}_i\bar{{\bm{a}}}_i^{\top}\bar{{\bm{w}}}))+\bar{\bm{\lambda}}^{\top}|\bar{{\bm{w}}}|\right\}.
	\end{equation}
	For later discussion, we  define the loss function $g(\cdot)$ by $g(\bar{{\bm{w}}}):={1}/{m}\sum_{i=1}^{m}\log(1+\exp(-{\bm{b}}_i\bar{{\bm{a}}}_i^{\top}\bar{{\bm{w}}}))$. Moreover, we further denote the KKT residual function of  \eqref{4.2} by $r(\bar{{\bm{w}}}^k):=\|\bar{{\bm{w}}}-{\rm{Prox}}_{\bar{\bm{\lambda}}^{\top} |\cdot|}(\bar{{\bm{w}}}-\nabla g(\bar{{\bm{w}}}))\|$. Given a current iteration point $\bar{{\bm{w}}}^k\in\mathbb{R}^{n+1}$,  the first-order and second-order approximations of $F$ at $\bar{{\bm{w}}}^k$ are respectively denoted by
	\begin{equation*}
		\begin{split}
			&l_k(\bar{{\bm{w}}}):=g(\bar{{\bm{w}}}^k)+\nabla g(\bar{{\bm{w}}}^k)^{\top}(\bar{{\bm{w}}}-\bar{{\bm{w}}}^k)+\bar{\bm{\lambda}}^{\top} |\bar{{\bm{w}}}|,\\
			&q_k(\bar{{\bm{w}}}):=g(\bar{{\bm{w}}}^k)+\nabla g(\bar{{\bm{w}}}^k)^{\top}(\bar{{\bm{w}}}-\bar{{\bm{w}}}^k)+\frac{1}{2}(\bar{{\bm{w}}}-\bar{{\bm{w}}}^k)^{\top}{\bm{H}}_k(\bar{{\bm{w}}}-\bar{{\bm{w}}}^k)+\bar{\bm{\lambda}}^{\top} |\bar{{\bm{w}}}|,
		\end{split}
	\end{equation*}
	where ${\bm{H}}_k$ is  an approximation of the Hessian matrix of $g$. In addition, we define $g_k(\cdot)$  and $r_k(\cdot)$ by
	\begin{equation*}
		\begin{aligned}
			&g_k(\bar{{\bm{w}}}):=g(\bar{{\bm{w}}}^k)+\nabla g(\bar{{\bm{w}}}^k)^{\top}(\bar{{\bm{w}}}-\bar{{\bm{w}}}^k)+\frac{1}{2}(\bar{{\bm{w}}}-\bar{{\bm{w}}}^k)^{\top}{\bm{H}}_k(\bar{{\bm{w}}}-\bar{{\bm{w}}}^k),\\ 
			& r_k(\bar{{\bm{w}}}):=\|\bar{{\bm{w}}}-{\rm{Prox}}_{\bar{\bm{\lambda}}^{\top} |\cdot|}(\bar{{\bm{w}}}-\nabla g_k(\bar{{\bm{w}}}))\|.
		\end{aligned}
	\end{equation*}
	
	The frameworks of newGLMNET algorithm, IRPN algorithm and PNT algorithm can be respectively referred to \cite{Y.H.L.2012}, \cite{Y.Z.S.2019}, \cite{M.Y.Z.2022}. All three algorithms are second-order algorithms.

	Next, we focus on the AS strategy combined with the above second-order algorithm to generate the solution path of problem \eqref{4.2}. 
	For the one-variable problem \eqref{4.2}, the process of the AS strategy for generating the solution path can be referred to  \cite{L.Y.S.2020}.  The algorithm framework of the AS strategy  is shown in Algorithm \ref{al7}. 

	\begin{algorithm}[htb]
		\caption{AS strategy for solving problem \eqref{4.2}}	\label{al7}
		\begin{algorithmic}[1]
			\Require Given a sequence: ${\lambda}^1>\ldots>{\lambda}^t>0$ and  tolerance $\epsilon>0$.
			
			\Ensure A solution path: $\bar{{\bm{w}}}^*({\lambda}^1),\ldots,\bar{{\bm{w}}}^*({\lambda}^t).$
			
			\State {\bf{Initialization:}}  Generate an initial index set $ I^*(\lambda^0)$ by a screening rule.
			
			\For{$i=1,2,\ldots,t$} 
			
			\State Let $I^0(\lambda^i)=I^*(\lambda^{i-1}),$ find 
			\begin{equation}\label{7.2}
				\begin{split}
					\bar{{\bm{z}}}^0(\lambda^i)\approx\mathop {\arg\min }\limits_{\bar{{\bm{z}}}\in\mathbb{R}^{|{I}^0(\lambda^i)|}} \left\{g^{{I}^0(\lambda^i)}(\bar{{\bm{z}}})+({\bar{\bm{\lambda}}^i})_{{I}^0(\lambda^i)}^{\top}|\bar{{\bm{z}}}|  \right\}
				\end{split}	
			\end{equation}
			such that $\|r^{{I}^0(\lambda^i)}(\bar{{\bm{z}}}^0(\lambda^i))\|\leq \epsilon/\sqrt{2},$ where $\bar{\bm{\lambda}}^i=\lambda^i\bar{e}, \ \bar{{\bm{e}}}=\left[1,\ldots,1,0\right]^{\top}\in\mathbb{R}^{n+1}$. 
			Extend $\bar{{\bm{z}}}^{0}(\lambda^i)$ to $\bar{{\bm{w}}}^{0}(\lambda^i)$ as 
			$
			\bar{{\bm{w}}}^{0}(\lambda^i)_{I^0(\lambda^i)}=\bar{{\bm{z}}}^{0}(\lambda^i),\ \bar{{\bm{w}}}^{0}(\lambda^i)_{\bar{I}^0(\lambda^i)}={\bm{0}},
			$
			where $\bar{I}^0(\lambda^i)$ denotes the complement of ${I}^0(\lambda^i)$.
			
			\State Compute $r(\bar{{\bm{w}}}^0(\lambda^i))$ and set $l=0$.
			
			\While{$\|r(\bar{{\bm{w}}}^l(\lambda^i))\|>\epsilon$}
			\State Create $J^{l+1}(\lambda^i)$:
			$$
			J^{l+1}(\lambda^i)=\left\{j\in\bar{I}^l(\lambda^i)\ | \ -(\nabla g(\bar{{\bm{w}}}^l(\lambda^i)))_j\notin \left(\partial(\bar{\bm{\lambda}}^i)^{\top}|\bar{{\bm{w}}}^l(\lambda^i)|+ {\epsilon \mathcal{B}_{\infty}}/{\sqrt{2|\bar{I}^l(\lambda^i)|}}\right) _j \right\}.
			$$
			\State Update $I^{l+1}\gets I^{l}\cup J^{l+1}$.  Solve  the following problem: 
			\begin{equation}\label{7.3}
				\begin{split}
					\bar{{\bm{z}}}^{l+1}(\lambda^i)\approx\mathop {\arg\min }\limits_{\bar{{\bm{z}}}\in\mathbb{R}^{|{I}^{l+1}(\lambda^i)|}} \left\{g^{{I}^{l+1}(\lambda^i)}(\bar{{\bm{z}}})+({\bar{\bm{\lambda}}^i})_{{I}^{l+1}(\lambda^i)}^{\top}|\bar{{\bm{z}}}|  \right\}
				\end{split}
			\end{equation}
			such that $\|r^{{I}^{l+1}(\lambda^i)}(\bar{{\bm{z}}}^{l+1}(\lambda^i))\|\leq \epsilon/\sqrt{2},$ where $\bar{\bm{\lambda}}^i=\lambda^i\bar{{\bm{e}}}, \ \bar{{\bm{e}}}=\left[1,\ldots,1,0\right]^{\top}\in\mathbb{R}^{n+1}$. 
			Extend $\bar{{\bm{z}}}^{l+1}(\lambda^i)$ to $\bar{{\bm{w}}}^{l+1}(\lambda^i)$ as 
			$
			\bar{{\bm{w}}}^{l+1}(\lambda^i)_{I^{l+1}(\lambda^i)}=\bar{{\bm{z}}}^{l+1}(\lambda^i),\ \bar{{\bm{w}}}^{l+1}(\lambda^i)_{\bar{I}^{l+1}(\lambda^i)}={\bm{0}}.
			$
			
			\State Compute $r(\bar{{\bm{w}}}^{l+1}(\lambda^i))$ and set $l\gets {l+1}$.
			
			\EndWhile
			
			\State Set $\bar{{\bm{w}}}^*(\lambda^i)= \bar{{\bm{w}}}^l(\lambda^i)$.
			
			\EndFor
		\end{algorithmic}
	\end{algorithm}
	
	In  problems \eqref{7.2} and \eqref{7.3} of Algorithm \ref{al7}, the functions $ g^{{I}}(\cdot)$ and $ r^{{I}}(\cdot)$   are  respectively denoted by 
	\begin{equation*}
		\begin{split}
			&g^{{I}}(\bar{{\bm{z}}}):=({1}/{m})\sum_{i=1}^{m}\log(1+\exp(-{\bm{b}}_i({\bm{\bar{A}}}_{I}\bar{{\bm{z}}})_i),\ \forall  \bar{{\bm{z}}}\in\mathbb{R}^{|{I}|},\\
			&r^{{I}}(\bar{{\bm{z}}}):=\|\bar{{\bm{z}}}-{\rm{Prox}}_{({\bar{\bm{\lambda}}^i})_{{I}}^{\top}|\cdot|}(\bar{{\bm{z}}}-\nabla g^{{I}}(\bar{{\bm{z}}})\|, \ \forall \bar{{\bm{z}}}\in\mathbb{R}^{|{I}|},
		\end{split}
	\end{equation*}
	where $ {\bm{\bar{A}}}=\left[\bar{{\bm{a}}}_1,\ldots,\bar{{\bm{a}}}_m\right]^{\top}\in\mathbb{R}^{m\times (n+1)}$ and ${\bm{\bar{A}}}_{I}$ is the matrix consisting of the columns of ${\bm{\bar{A}}}$ indexed by $ {{I}}.$

	\subsection{Stopping Criteria and Parameter Settings}
	In this subsection, we specify the stopping criteria for the tested algorithms and set the parameters for each algorithm.
	
	\subsubsection{Stopping Criteria}
	In our numerical  experiments, the following relative KKT residual is used to measure the accuracy of approximate optimal solutions obtained by PPDNA and AS strategy:
	\begin{equation*}
		\begin{split}
			&R_{kkt1}:=\max\left\{\frac{\|{\bm{w}}-{\rm{Prox}}_{\lambda\|\cdot\|_1}({\bm{w}}-{\bm{A}}^{\top}{\bm{u}})\|}{1+\|{\bm{w}}\|+\|{\bm{A}}^{\top}{\bm{u}}\|},\frac{|{\bm{u}}^{\top}\boldsymbol{1}_m|}{1+|{\bm{u}}^{\top}\boldsymbol{1}_m|}\right\},\\ &{R_{kkt2}}:=\frac{\|{\bm{y}}-{\bm{Aw}}-v\boldsymbol{1}_m\|}{1+\|{\bm{y}}\|+\|{\bm{Aw}}+v\boldsymbol{1}_m\|},\\
			&R_{kkt}:=\max\left\{R_{kkt1},R_{kkt2}\right\}.
		\end{split}	
	\end{equation*}
	Combining \eqref{grad} and \eqref{aux}, we note that $\nabla h(y)-u=0$ always holds during the algorithm iterations, so we do not consider this stopping criterion.
	In the AS strategy, when PPDNA is used to solve the problem with smaller dimension, we use the following relative KKT to measure the accuracy of the optimal solution of the problem with smaller dimension:
	\begin{equation*}
		\begin{split}
			&R^I_{kkt1}:=\max\left\{\frac{\|{\bm{z}}-{\rm{Prox}}_{\lambda\|\cdot\|_1}({\bm{z}}-({\bm{A}}_{I})^{\top}{\bm{u}})\|}{1+\|{\bm{w}}\|+\|{\bm{A}}^{\top}{\bm{u}}\|},\frac{|{\bm{u}}^{\top}\boldsymbol{1}_m|}{1+|{\bm{u}}^{\top}\boldsymbol{1}_m|}\right\},\\ &{R^I_{kkt2}}:=\frac{\|{\bm{y}}-{\bm{A}}_{I}{\bm{z}}-v\boldsymbol{1}_m\|}{1+\|{\bm{y}}\|+\|{\bm{Aw}}+v\boldsymbol{1}_m\|},\\
			&R^I_{kkt}:=\max\left\{R^I_{kkt1},R^I_{kkt2}\right\}.
		\end{split}	
	\end{equation*}
	
	For a given accuracy tolerance ``tol", we  terminate the tested algorithms when $R_{kkt}\leq {\rm{tol}}$ or the number of iterations exceeds $500$. We start the {\sc PPDNA} and  {\sc Ssn} algorithm with an initial point $({\bm{w}}^0,v^0,{\bm{u}}^0)=({\bm{0}},0,-2\times10^{-7}{\bm{b}}/m)$ on real and random data sets, where the choice of ${\bm{u}}^0$ should satisfy ${\bm{u}}^0\in {\rm{dom}}h^*=\{{\bm{u}}\in\mathbb{R}^m| -(1/m){\bm{1}}_m< {\bm{u}}\circ {\bm{b}}<{\bm{0}} \}$. For the initial point of the {\sc PPDNA} in the AS strategy, our settings are the same as above.
	
	For the  comparison  algorithms of IRPN, PNT, and newGLMNET, we stop the algorithms when 
	\begin{equation*}
		R_{kkt}:=\frac{\|\bar{{\bm{w}}}-{\rm{Prox}}_{\bar{\bm{\lambda}}^{\top}|\cdot|}(\bar{{\bm{w}}}-\nabla g(\bar{{\bm{w}}}))\|}{1+\|\bar{{\bm{w}}}\|+\|\nabla g(\bar{{\bm{w}}})\|}\leq{\rm{tol}}
	\end{equation*}
	or the number of iterations exceeds $500$, and  the initial iteration point of the IRPN, PNT  and newGLMNET algorithm are  set to $\bm{0}$. 
	
	\subsubsection{Parameter Settings}
	For the PPDNA in Algorithm \ref{al1}, we initialize the parameter $\sigma_0=\gamma_0=40/\lambda$ for real data and random data. We below present the update rules for $\sigma_{k+1}$ and $\gamma_{k+1}$. The strategy for updating $\sigma_{k+1}$ and $\gamma_{k+1}$ are  $\sigma_{k+1}=\min\{5\times 10^4,\rho'\sigma_k\}$ and  $\gamma_{k+1}=\min\{5\times 10^4,\rho'\gamma_k\}$, respectively, where 
	\begin{equation*}
		\rho'=\left\{
		\begin{aligned}
			&1.01, &r^k<0.01,\\
			&1,	&r^k\geq 0.01,
		\end{aligned}\right.\quad 
		r^k=\left\{
		\begin{aligned}
			&1, & k=0,\\
			&(R_{kkt1})^k/(R_{kkt1})^{k-1}, & k\geq 1
		\end{aligned}\right.
	\end{equation*}
	and  $(R_{kkt1})^k$ represents the value of $R_{kkt1}$ at the $k$-th outer iteration. 
	
	In Algorithm \ref{al2}, we set $\mu=0.01$ and $\eta=0.6$. The stopping criterion of the CG algorithm at the $j$-th step  is chosen as $\|{\bm{\mathcal{H}}}_j({\bm{d}}^j)+\nabla\psi({\bm{u}}^{j})\|\leq \min\{0.005,\|\nabla\psi({\bm{u}}^{j})\|^{1.1}\}$. In addition, we terminate the {\sc Ssn} algorithm when the stopping criteria \eqref{eqn3} and \eqref{eqn4} are satisfied, where $\epsilon_k$ and $\delta_k$  are  chosen as $\epsilon_k=\delta_k=9/k^{1.01}$.
	
	In Algorithm \ref{al3}, the parameters of the PPDNA in the adaptive sieving strategy are the same as them by the above settings. The initial active index set $I^*(\lambda^0)$ is set below, which is borrowed from \cite{L.Y.S.2020}.  We first compute ${\bm{s}}_i:={|\left\langle  {\bm{a}}_i,{\bm{b}}\right\rangle |}/(\|{\bm{a}}_i\| \|{\bm{b}}\|)$. Then the initial active index set can be obtained by
	\begin{equation*}
		I^*(\lambda^0)=\left\{i\in\{1,...,n\}\ | \ {\bm{s}}_i \ \rm{is\ among \ the\ first \ [\sqrt{n}] \ largest \ of\ all} \right\}.
	\end{equation*}
	The specific screening rules  can be referred to the work of  \cite{F.L.2008}.
	
	For the newGLMNET algorithm, IRPN algorithm, and PNT algorithm, we set the same parameters as in references  \cite{Y.H.L.2012}, \cite{Y.Z.S.2019}, \cite{M.Y.Z.2022}, respectively.
	
	
	
	\subsection{Numerical Results for Random Data}\label{Numerical Results for Random Data}
	In this subsection, we compare our algorithms (PPDNA, AS strategy with PPDNA) with newGLMNET, IRPN and PNT on random data.  We set  $(m,n)=(200i,5000i),i=1,...,9$, where $m$ and $n$ denote the number of samples and features, respectively. We follow the way  \cite{KK.K.B.2007} to generate random data $({\bm{A}}, {\bm{b}})$. We first generate roughly equal numbers of positive and negative samples, each about half of the total number of samples. The features of positive samples are independent and identical distribution, drawn
	from a normal distribution  $\mathcal{N}(1,1)$, while the features of negative samples are also independent and identically distributed, drawn
	from a normal distribution  $\mathcal{N}(-1,1)$. Furthermore, the sparsity of matrix ${\bm{A}}$ is chosen to be $70\%$.
	
	Next, we discuss the choice of hyper-parameter  $\lambda$ in our numerical experiments. 
	Based on the first-order optimality condition of problem \eqref{1.1}, we compute a critical value $\hat{\lambda}_{\max}$, which is given by   
	$$
	\hat{\lambda}_{\max}=\left\| \frac{1}{m}{\bm{A}}^{\top}({\bf{1}}_m-p_{\log}(\log(m_{+}/m_{-}),{\bm{0}}))\right\| _{\infty},
	$$   
	where  $m_{+}$ and $m_{-}$ denote the number of positive and negative samples, respectively. By virtue of \cite{K.K.B.2007}, we know that if $\lambda\geq\hat{\lambda}_{\max}$, then the optimal solution to problem \eqref{1.1} can achieve the maximum sparsity, i.e., ${\bm{w}}={\bm{0}}$. Taking the above into consideration, we test three values of the hyper-parameter $\lambda$: $\lambda=0.5\hat{\lambda}_{\max},\ 0.1\hat{\lambda}_{\max},\ 0.05\hat{\lambda}_{\max}.$ 
	
	Table \ref{tab:4} presents the numerical results of PPDNA, AS strategy, newGLMNET, IRPN and PNT on random data.  The results shown in this table include the  iteration steps (iter), the relative KKT residuals ($R_{kkt}$), and the CPU time (time). For the hyper-parameter sequence $0.5\hat{\lambda}_{\max}>0.1\hat{\lambda}_{\max}>0.05\hat{\lambda}_{\max}$ in the Table \ref{tab:4}, we let the AS strategy with PPDNA generate a solution path of problem \eqref{1.1}.
	The time of the  AS strategy with PPDNA in Table \ref{tab:4} represents the running time of solving problem \eqref{1.1}. It is observed that all algorithms can successfully solve all instances with high accuracy. As shown in the Table \ref{tab:4}, the number of internal iterations of PPDNA is much less than other second-order algorithms. When $m,n$ are large, the running time of PPDNA is much faster than newGLMNET, IRPN and PNT. Specifically, for the Instance $9$, the time of the AS strategy with PPDNA hardly exceeds $10$ seconds, while  PNT needs more than $300$ seconds to obtain the approximate optimal solution, and the time for newGLMNET and IRPN to solve large scale problems even reaches $1000$ seconds. In most cases, PPDNA and AS strategy with PPDNA can obtain higher accuracy solutions than newGLMNET, IRPN, and PNT. 
	
	\begin{center}
		\setlength{\tabcolsep}{0.7pt}{
			\begin{longtable}{|c|c|ccc|ccc|ccc|}
				\captionsetup{width=0.9\textwidth}
				\caption{Numerical results of PPDNA, AS strategy with PPDNA,  newGLMNET, IRPN and PNT  on random data when ${\rm{tol}}=10^{-6}$.  ``a"= PPDNA, ``b" = AS strategy with PPDNA, ``c"=newGLMNET, ``d"=IRPN, ``e"=PNT. ``3(19)" means 3 outer iterations (the total number of inner iterations is 19), and times are shown in seconds}\label{tab:4}\\
				\hline	 \multirow{2}*{Case} &\multirow{2}*{Alg}& \multicolumn{3}{c|}{$\lambda=0.5\lambda_{\max}$} & \multicolumn{3}{c|}{$\lambda=0.1\lambda_{\max}$} & \multicolumn{3}{c|}{$\lambda=0.05\lambda_{\max}$}\\   
				& 	& \multicolumn{1}{c}{iter} &\multicolumn{1}{c}{time}& \multicolumn{1}{c|}{$R_{kkt}$} & \multicolumn{1}{c}{iter}& \multicolumn{1}{c}{time}&  \multicolumn{1}{c|}{$R_{kkt}$}& \multicolumn{1}{c}{iter}& \multicolumn{1}{c}{time}& \multicolumn{1}{c|}{$R_{kkt}$}\\ \hline
				\endfirsthead	
				\hline 
				\endfoot
				\hline	 \multirow{2}*{Case} &\multirow{2}*{Alg}& \multicolumn{3}{c|}{$\lambda=0.5\lambda_{\max}$} & \multicolumn{3}{c|}{$\lambda=0.1\lambda_{\max}$} & \multicolumn{3}{c|}{$\lambda=0.05\lambda_{\max}$}\\  
				& 	& \multicolumn{1}{c}{iter} &\multicolumn{1}{c}{time}& \multicolumn{1}{c|}{$R_{kkt}$} & \multicolumn{1}{c}{iter}& \multicolumn{1}{c}{time}&  \multicolumn{1}{c|}{$R_{kkt}$}& \multicolumn{1}{c}{iter}& \multicolumn{1}{c}{time}& \multicolumn{1}{c|}{$R_{kkt}$}\\ \hline
				\endhead
				& $a$	 &  3(19) 	 & 0.39 	 & 1.3e-07	 &  3(19) 	 & 0.30 	 & 4.5e-07	 &  3(21) 	 & 0.39 	 & 8.4e-07\\  
				& $b$	 & $3(19)$	 & 0.06 	 & 1.3e-07	 & $3(17)$	 & 0.05 	 & 3.8e-07	 & $4(23)$	 & 0.05 	 & 8.3e-08\\  
				1	 & $c$	 &  15(62) 	 & 0.54 	 & 7.1e-07	 &  22(230) 	 & 1.01 	 & 4.7e-07	 &  23(253) 	 & 1.09 	 & 5.5e-07\\  
				& $d$	 &  7(61) 	 & 0.35 	 & 3.1e-07	 &  8(234) 	 & 0.65 	 & 2.5e-07	 &  9(252) 	 & 0.71 	 & 3.4e-07\\  
				& $e$	 &  14(65) 	 & 0.61 	 & 3.8e-07	 &  16(212) 	 & 0.92 	 & 8.2e-07	 &  17(238) 	 & 1.00 	 & 6.7e-07\\  
				\hline	 & $a$	 &  3(19) 	 & 1.66 	 & 9.1e-08	 &  3(20) 	 & 1.79 	 & 2.8e-07	 &  3(19) 	 & 1.50 	 & 2.9e-07\\  
				& $b$	 & $3(19)$	 & 0.11 	 & 9.3e-08	 & $3(20)$	 & 0.23 	 & 2.8e-07	 & $3(17)$	 & 0.16 	 & 5.2e-07\\  
				2	 & $c$	 &  16(115) 	 & 4.09 	 & 6.8e-07	 &  23(462) 	 & 8.16 	 & 4.4e-07	 &  26(576) 	 & 10.27 	 & 5.4e-07\\  
				& $d$	 &  8(136) 	 & 2.80 	 & 8.3e-08	 &  10(459) 	 & 5.62 	 & 4.8e-07	 &  11(518) 	 & 6.27 	 & 6.4e-07\\  
				& $e$	 &  13(119) 	 & 3.84 	 & 5.3e-07	 &  17(453) 	 & 7.54 	 & 4.7e-07	 &  19(600) 	 & 8.83 	 & 3.4e-07\\  
				\hline	 & $a$	 &  3(19) 	 & 3.74 	 & 7.1e-08	 &  3(20) 	 & 3.77 	 & 1.4e-07	 &  3(18) 	 & 3.09 	 & 1.9e-07\\  
				& $b$	 & $3(19)$	 & 0.40 	 & 7.1e-08	 & $3(19)$	 & 0.64 	 & 2.0e-07	 & $3(18)$	 & 0.44 	 & 2.4e-07\\  
				3	 & $c$	 &  17(175) 	 & 14.26 	 & 5.4e-07	 &  30(747) 	 & 31.55 	 & 6.7e-07	 &  26(788) 	 & 29.68 	 & 7.7e-07\\  
				& $d$	 &  5(190) 	 & 7.80 	 & 8.5e-08	 &  12(729) 	 & 20.28 	 & 8.0e-07	 &  15(888) 	 & 24.78 	 & 4.6e-07\\  
				& $e$	 &  13(172) 	 & 13.19 	 & 4.0e-07	 &  17(766) 	 & 24.74 	 & 4.1e-07	 &  19(959) 	 & 30.40 	 & 3.4e-07\\  
				\hline	 & $a$	 &  3(18) 	 & 6.03 	 & 1.9e-07	 &  3(21) 	 & 7.20 	 & 1.7e-07	 &  3(20) 	 & 6.27 	 & 2.0e-07\\  
				& $b$	 & $3(19)$	 & 0.42 	 & 1.1e-07	 & $3(19)$	 & 0.64 	 & 2.0e-07	 & $3(20)$	 & 0.84 	 & 2.0e-07\\  
				4	 & $c$	 &  15(161) 	 & 27.35 	 & 4.3e-07	 &  27(914) 	 & 67.04 	 & 9.6e-07	 &  30(1051) 	 & 75.25 	 & 5.8e-07\\  
				& $d$	 &  9(189) 	 & 19.58 	 & 8.0e-08	 &  14(930) 	 & 49.51 	 & 5.7e-07	 &  15(895) 	 & 49.35 	 & 8.7e-07\\  
				& $e$	 &  12(146) 	 & 23.10 	 & 8.0e-07	 &  17(891) 	 & 54.16 	 & 7.4e-07	 &  19(1176) 	 & 65.92 	 & 3.3e-07\\  
				\hline	 & $a$	 &  3(19) 	 & 10.42 	 & 3.3e-07	 &  3(21) 	 & 12.18 	 & 1.1e-07	 &  3(20) 	 & 10.79 	 & 1.5e-07\\  
				& $b$	 & $3(20)$	 & 0.72 	 & 4.2e-08	 & $4(24)$	 & 2.08 	 & 2.6e-08	 & $3(19)$	 & 1.49 	 & 4.3e-07\\  
				5	 & $c$	 &  17(226) 	 & 59.59 	 & 4.2e-07	 &  30(1040) 	 & 134.36 	 & 7.0e-07	 &  41(1540) 	 & 186.74 	 & 7.1e-07\\  
				& $d$	 &  5(253) 	 & 28.90 	 & 8.8e-08	 &  17(1227) 	 & 109.42 	 & 8.7e-07	 &  22(1593) 	 & 143.10 	 & 4.2e-07\\  
				& $e$	 &  13(234) 	 & 51.38 	 & 3.1e-07	 &  18(1294) 	 & 118.60 	 & 3.3e-07	 &  19(1719) 	 & 142.80 	 & 3.2e-07\\  
				\hline	 & $a$	 &  3(18) 	 & 14.64 	 & 7.6e-08	 &  3(22) 	 & 20.32 	 & 2.4e-07	 &  4(20) 	 & 14.67 	 & 9.8e-09\\  
				& $b$	 & $3(18)$	 & 1.00 	 & 3.6e-08	 & $3(22)$	 & 3.27 	 & 3.2e-07	 & $3(18)$	 & 2.31 	 & 1.1e-07\\  
				6	 & $c$	 &  17(244) 	 & 102.16 	 & 4.0e-07	 &  39(1473) 	 & 295.16 	 & 6.7e-07	 &  40(1904) 	 & 331.55 	 & 7.7e-07\\  
				& $d$	 &  5(232) 	 & 45.30 	 & 3.2e-07	 &  20(1508) 	 & 209.32 	 & 8.3e-07	 &  26(2033) 	 & 276.15 	 & 8.8e-07\\  
				& $e$	 &  13(251) 	 & 87.15 	 & 3.3e-07	 &  18(1592) 	 & 210.88 	 & 3.9e-07	 &  18(1922) 	 & 235.62 	 & 6.8e-07\\  
				\hline	 & $a$	 &  3(20) 	 & 25.37 	 & 9.0e-08	 &  3(20) 	 & 25.69 	 & 9.9e-08	 &  3(18) 	 & 21.19 	 & 7.8e-08\\  
				& $b$	 & $3(20)$	 & 3.20 	 & 1.0e-07	 & $3(20)$	 & 4.41 	 & 8.1e-08	 & $3(17)$	 & 4.67 	 & 1.0e-07\\  
				7	 & $c$	 &  16(352) 	 & 165.57 	 & 4.1e-07	 &  40(1709) 	 & 487.71 	 & 5.2e-07	 &  37(1730) 	 & 465.89 	 & 8.2e-07\\  
				& $d$	 &  6(355) 	 & 89.69 	 & 2.8e-07	 &  24(1817) 	 & 388.72 	 & 7.1e-07	 &  25(1912) 	 & 397.09 	 & 8.1e-07\\  
				& $e$	 &  13(328) 	 & 143.27 	 & 6.3e-07	 &  18(1999) 	 & 359.60 	 & 3.3e-07	 &  18(2380) 	 & 398.68 	 & 5.7e-07\\  
				\hline	 & $a$	 &  3(20) 	 & 36.28 	 & 4.7e-08	 &  3(22) 	 & 41.92 	 & 5.4e-08	 &  3(18) 	 & 26.66 	 & 4.6e-08\\  
				& $b$	 & $3(20)$	 & 2.72 	 & 4.7e-08	 & $3(22)$	 & 7.25 	 & 5.0e-08	 & $3(17)$	 & 6.68 	 & 4.6e-08\\  
				8	 & $c$	 &  16(455) 	 & 265.63 	 & 3.9e-07	 &  46(2212) 	 & 857.55 	 & 9.4e-07	 &  37(1573) 	 & 662.80 	 & 9.7e-07\\  
				& $d$	 &  7(460) 	 & 157.39 	 & 3.4e-07	 &  29(2324) 	 & 673.85 	 & 8.6e-07	 &  24(1812) 	 & 542.71 	 & 7.7e-07\\  
				& $e$	 &  13(431) 	 & 227.83 	 & 5.0e-07	 &  18(2530) 	 & 582.16 	 & 2.6e-07	 &  19(2541) 	 & 595.26 	 & 9.0e-07\\  
				\hline	 & $a$	 &  3(19) 	 & 43.41 	 & 3.6e-08	 &  3(18) 	 & 43.15 	 & 6.3e-08	 &  3(18) 	 & 37.75 	 & 6.5e-08\\  
				& $b$	 & $3(19)$	 & 3.38 	 & 3.6e-08	 & $3(19)$	 & 7.38 	 & 6.3e-08	 & $3(18)$	 & 6.09 	 & 1.0e-07\\  
				9	 & $c$	 &  17(404) 	 & 386.61 	 & 3.5e-07	 &  47(2112) 	 & 1233.33 	 & 8.8e-07	 &  38(1633) 	 & 955.85 	 & 8.8e-07\\  
				& $d$	 &  7(459) 	 & 218.74 	 & 1.7e-07	 &  31(2516) 	 & 1008.03 	 & 7.1e-07	 &  23(1642) 	 & 699.41 	 & 7.1e-07\\  
				& $e$	 &  13(416) 	 & 319.88 	 & 3.1e-07	 &  17(2166) 	 & 707.63 	 & 9.1e-07	 &  19(2687) 	 & 836.93 	 & 9.2e-07
				
		\end{longtable}}
	\end{center}

		
		\begin{figure}[H]
			\centering
			\subfigure[]{
				\begin{minipage}[htbp]{0.45\linewidth}
					\includegraphics[height=50mm,width=60mm]{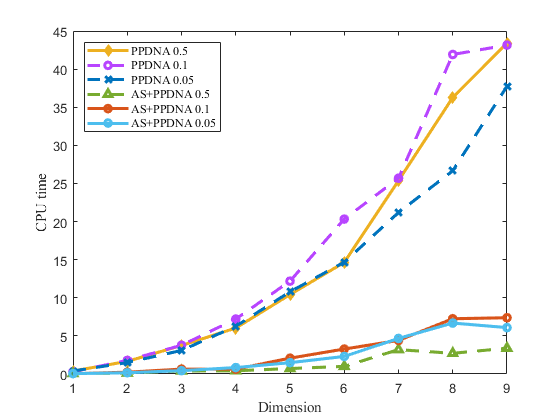}
			\end{minipage}}
			\subfigure[]{
				\begin{minipage}[htbp]{0.45\linewidth}
					\includegraphics[height=50mm,width=60mm]{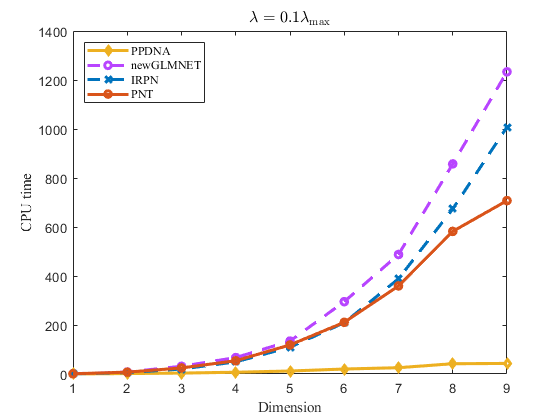}
			\end{minipage}}
			
			\caption{The time comparison of PPDNA, AS strategy with PPDNA, newGLMNET, IRPN, and PNT on  random data when ${\rm{tol}}=10^{-6}$}
			\label{fig:1}
		\end{figure}

		In order to more intuitively demonstrate the power of the AS strategy with the PPDNA, we show the time comparison between the PPDNA and the AS strategy with the PPDNA in the left panel of Fig.~\ref{fig:1}. We could find that the combination of the  PPDNA and the AS strategy can greatly improve the efficiency, and the running time is more than $5$ times faster than the oracle PPDNA. We demonstrate the efficiency of the PPDNA by showing the time comparison of PPDNA with other second-order solvers  in the right panel of Fig.~\ref{fig:1}. For the case with larger $m$ and $n$ in this figure, the running time of PPDNA is about $10$ times or more faster than that of newGLMNET, IRPN and PNT. Fig.~\ref{fig:1} reveals the excellent performance of the AS strategy in practice.

		\begin{sidewaystable}[p]
			\centering
			\caption{Numerical results of  AS strategy with PPDNA, AS  strategy with newGLMNET, AS strategy with IRPN and AS strategy with PNT on random data when ${\rm{tol}}=10^{-6}$.  ``a"= AS strategy with PPDNA, ``b" = AS strategy with newGLMNET, ``c"=AS strategy with IRPN, ``d"=AS strategy with PNT. Times are shown in seconds} \label{tab:6}
			\begin{tabular*}{\textheight}{@{\extracolsep\fill}cccccccc}
				\toprule%
				& & \multicolumn{1}{@{}c@{}}{time}& \multicolumn{1}{@{}c@{}}{nnx} &\multicolumn{1}{@{}c@{}}{$R_{kkt}$} & \multicolumn{1}{@{}c@{}}{iOuter}& \multicolumn{1}{@{}c@{}}{iInner}& \multicolumn{1}{@{}c@{}}{iAS} \\\cmidrule{3-3}\cmidrule{4-4}\cmidrule{5-5}\cmidrule{6-6}\cmidrule{7-7}\cmidrule{8-8}%
					i &$\lambda$&$a|b|c|d$&$a|b|c|d$&$a|b|c|d$&$a|b|c|d$&$a|b|c|d$&$a|b|c|d$\\ \midrule
				 & 0.5	 & 0.09$|$0.13$|$0.11$|$0.11  	 & 38$|$39$|$39$|$39  	 & 1.3e-07$|$1.9e-07$|$5.8e-08$|$1.0e-07  	 & 6$|$35$|$10$|$25  	 & 38$|$128$|$111$|$122  	 & 2$|$2$|$2$|$2  \\  
			1	 & 0.1	 & 0.14$|$0.38$|$0.28$|$0.33  	 & 96$|$97$|$97$|$97  	 & 3.8e-07$|$4.4e-07$|$2.5e-07$|$3.9e-07  	 & 9$|$63$|$24$|$48  	 & 48$|$559$|$468$|$545  	 & 3$|$3$|$3$|$3  \\  
			& 0.05	 & 0.20$|$0.58$|$0.41$|$0.47  	 & 109$|$110$|$110$|$110  	 & 8.3e-08$|$5.3e-07$|$2.7e-07$|$4.5e-07  	 & 11$|$67$|$25$|$50  	 & 62$|$699$|$650$|$699  	 & 2$|$2$|$2$|$2  \\  
			\hline	 & 0.5	 & 0.09$|$0.92$|$0.67$|$0.75  	 & 71$|$72$|$72$|$72  	 & 9.3e-08$|$1.4e-07$|$9.7e-08$|$8.9e-08  	 & 6$|$54$|$15$|$38  	 & 37$|$356$|$312$|$358  	 & 2$|$3$|$3$|$3  \\  
			2	 & 0.1	 & 0.42$|$2.20$|$1.56$|$1.78  	 & 175$|$176$|$176$|$176  	 & 2.8e-07$|$4.4e-07$|$2.5e-07$|$2.6e-07  	 & 9$|$59$|$28$|$53  	 & 59$|$1218$|$1145$|$1230  	 & 3$|$3$|$3$|$3  \\  
			& 0.05	 & 0.58$|$3.27$|$2.28$|$2.55  	 & 200$|$201$|$201$|$201  	 & 5.2e-07$|$5.4e-07$|$3.6e-07$|$3.3e-07  	 & 9$|$72$|$34$|$54  	 & 54$|$1554$|$1583$|$1617  	 & 2$|$2$|$2$|$2  \\  
			\hline	 & 0.5	 & 0.41$|$2.64$|$2.05$|$2.19  	 & 95$|$96$|$96$|$96  	 & 7.1e-08$|$1.5e-07$|$7.6e-08$|$1.3e-07  	 & 9$|$56$|$18$|$39  	 & 58$|$524$|$502$|$514  	 & 3$|$3$|$3$|$3  \\  
			3	 & 0.1	 & 0.97$|$7.22$|$4.97$|$5.39  	 & 231$|$232$|$232$|$232  	 & 2.0e-07$|$4.1e-07$|$2.3e-07$|$2.5e-07  	 & 9$|$83$|$35$|$56  	 & 58$|$1913$|$1929$|$1995  	 & 3$|$3$|$3$|$3  \\  
			& 0.05	 & 1.41$|$10.92$|$7.19$|$7.84  	 & 270$|$271$|$271$|$271  	 & 2.4e-07$|$5.1e-07$|$3.5e-07$|$4.2e-07  	 & 10$|$87$|$41$|$58  	 & 57$|$2581$|$2447$|$2670  	 & 2$|$2$|$2$|$2  \\  
			\hline	 & 0.5	 & 0.44$|$3.86$|$2.97$|$3.20  	 & 97$|$98$|$98$|$98  	 & 1.1e-07$|$1.2e-07$|$1.2e-07$|$7.7e-08  	 & 6$|$36$|$10$|$25  	 & 38$|$304$|$277$|$302  	 & 2$|$2$|$2$|$2  \\  
			4	 & 0.1	 & 1.08$|$14.19$|$9.73$|$10.14  	 & 284$|$285$|$285$|$285  	 & 2.0e-07$|$2.3e-07$|$2.5e-07$|$3.5e-07  	 & 6$|$86$|$42$|$55  	 & 39$|$2533$|$2643$|$2578  	 & 2$|$3$|$3$|$3  \\  
			& 0.05	 & 2.00$|$22.59$|$14.84$|$15.61  	 & 324$|$325$|$325$|$325  	 & 2.0e-07$|$3.3e-07$|$4.5e-07$|$3.4e-07  	 & 6$|$99$|$50$|$60  	 & 40$|$3260$|$3349$|$3381  	 & 2$|$2$|$2$|$2  \\  
			\hline	 & 0.5	 & 0.72$|$8.30$|$6.63$|$7.06  	 & 126$|$127$|$127$|$127  	 & 4.2e-08$|$1.1e-07$|$6.7e-08$|$1.0e-07  	 & 6$|$37$|$11$|$26  	 & 39$|$405$|$387$|$400  	 & 2$|$2$|$2$|$2  \\  
			5	 & 0.1	 & 2.80$|$28.28$|$21.14$|$22.01  	 & 337$|$338$|$338$|$338  	 & 2.6e-08$|$2.6e-07$|$3.1e-07$|$2.3e-07  	 & 10$|$90$|$50$|$58  	 & 63$|$3149$|$3448$|$3495  	 & 3$|$3$|$3$|$3  \\  
			& 0.05	 & 4.22$|$45.41$|$32.80$|$33.76  	 & 392$|$393$|$393$|$393  	 & 4.3e-07$|$5.5e-07$|$5.7e-07$|$4.9e-07  	 & 10$|$106$|$64$|$63  	 & 61$|$4531$|$4447$|$4699  	 & 2$|$2$|$2$|$2  \\  
			\hline	 & 0.5	 & 1.17$|$12.45$|$10.00$|$10.65  	 & 134$|$135$|$135$|$135  	 & 3.6e-08$|$1.0e-07$|$4.9e-08$|$9.4e-08  	 & 6$|$37$|$12$|$28  	 & 37$|$457$|$454$|$460  	 & 2$|$2$|$2$|$2  \\  
			6	 & 0.1	 & 4.50$|$50.75$|$34.34$|$34.88  	 & 387$|$388$|$388$|$388  	 & 3.2e-07$|$2.3e-07$|$3.1e-07$|$2.4e-07  	 & 9$|$118$|$60$|$60  	 & 63$|$3969$|$4243$|$4230  	 & 3$|$3$|$3$|$3  \\  
			& 0.05	 & 6.95$|$83.00$|$55.93$|$54.77  	 & 460$|$461$|$461$|$460  	 & 1.1e-07$|$4.5e-07$|$3.2e-07$|$3.1e-07  	 & 9$|$137$|$82$|$63  	 & 57$|$5633$|$6341$|$5930  	 & 2$|$2$|$2$|$2  \\  
			\hline	 & 0.5	 & 3.03$|$33.70$|$26.90$|$28.56  	 & 173$|$174$|$174$|$174  	 & 1.0e-07$|$9.9e-08$|$6.6e-08$|$5.9e-08  	 & 9$|$56$|$20$|$42  	 & 61$|$1099$|$1106$|$1137  	 & 3$|$3$|$3$|$3  \\  
			7	 & 0.1	 & 7.51$|$90.08$|$68.43$|$68.58  	 & 458$|$459$|$459$|$459  	 & 8.1e-08$|$2.3e-07$|$2.9e-07$|$2.0e-07  	 & 9$|$104$|$74$|$60  	 & 58$|$4923$|$5536$|$5594  	 & 3$|$3$|$3$|$3  \\  
			& 0.05	 & 12.28$|$158.77$|$99.94$|$114.54  	 & 531$|$532$|$533$|$532  	 & 1.0e-07$|$3.6e-07$|$6.7e-07$|$3.3e-07  	 & 9$|$130$|$85$|$61  	 & 52$|$6132$|$6677$|$7856  	 & 3$|$3$|$2$|$3  \\  
			\hline	 & 0.5	 & 2.50$|$35.21$|$25.68$|$27.15  	 & 199$|$200$|$200$|$200  	 & 4.7e-08$|$9.5e-08$|$8.6e-08$|$5.5e-08  	 & 6$|$37$|$14$|$28  	 & 40$|$766$|$767$|$794  	 & 2$|$2$|$2$|$2  \\  
			8	 & 0.1	 & 9.42$|$135.71$|$91.53$|$89.39  	 & 514$|$515$|$515$|$515  	 & 5.0e-08$|$2.1e-07$|$3.0e-07$|$2.6e-07  	 & 9$|$144$|$89$|$58  	 & 64$|$6411$|$7268$|$6892  	 & 3$|$3$|$3$|$3  \\  
			& 0.05	 & 16.20$|$231.65$|$155.16$|$158.75  	 & 588$|$589$|$589$|$590  	 & 4.6e-08$|$3.3e-07$|$3.7e-07$|$3.2e-07  	 & 9$|$121$|$77$|$62  	 & 50$|$5598$|$5660$|$9177  	 & 3$|$3$|$3$|$3  \\  
			\hline	 & 0.5	 & 4.52$|$59.79$|$36.47$|$38.12  	 & 193$|$194$|$194$|$194  	 & 3.6e-08$|$8.7e-08$|$6.0e-08$|$5.7e-08  	 & 6$|$39$|$14$|$27  	 & 37$|$744$|$732$|$751  	 & 2$|$2$|$2$|$2  \\  
			9	 & 0.1	 & 12.73$|$204.50$|$124.32$|$118.42  	 & 519$|$520$|$520$|$520  	 & 6.3e-08$|$2.7e-07$|$2.2e-07$|$3.0e-07  	 & 9$|$163$|$86$|$59  	 & 56$|$6837$|$7053$|$6949  	 & 3$|$3$|$3$|$3  \\  
			& 0.05	 & 18.82$|$292.59$|$183.49$|$180.66  	 & 608$|$609$|$608$|$608  	 & 1.0e-07$|$3.5e-07$|$6.4e-07$|$5.7e-07  	 & 9$|$142$|$85$|$61  	 & 54$|$6426$|$6560$|$9076  	 & 2$|$2$|$2$|$2  \\
				\botrule
			\end{tabular*}
		\end{sidewaystable}
		Since the adaptive sieving strategy is independent of the solver, for a fairer comparison,  we apply the AS strategy in combination with the algorithms  PPDNA, newGLMNET, IRPN, and PNT, respectively.	 Table \ref{tab:6} reveals the numerical results of the AS strategy with PPDNA, newGLMNET, IRPN, and PNT, respectively. Here,  AS strategy with PPDNA generates the solution path of problem \eqref{1.1}, while  AS strategy with newGLMNET,  AS strategy with IRPN and  AS strategy with PNT generate the solution path of the equivalent problem \eqref{4.2}. Likewise, we generate solution paths with $\lambda=0.5\hat{\lambda}_{\max},\ 0.1\hat{\lambda}_{\max},\ 0.05\hat{\lambda}_{\max}$. In this table, $``{\rm{nnx}}"$ denotes the number of non-zero components of the optimal solution ${\bm{w}}$ or $\bar{{\bm{w}}}$, and   ``time"  denotes the cumulative running time, $``{\rm{iOuter}}"$ represents the total number of outer iterations required for problem \eqref{3.2} and problem \eqref{3.3} to obtain an optimal solution for each problem $(P_{\lambda^i})$ in Algorithm \ref{al3}, $``{\rm{iInner}}"$ represents the total number of inner iterations required for problem \eqref{3.2} and problem \eqref{3.3} to obtain an optimal solution for each problem $(P_{\lambda^i})$ in Algorithm \ref{al3}, $``{\rm{iAS}}"$ denotes the number of sieving required to obtain an approximate solution to each problem $(P_{\lambda^i})$.

		\begin{figure}[H]
			\centering
			\includegraphics[width=80mm,height = 60mm]{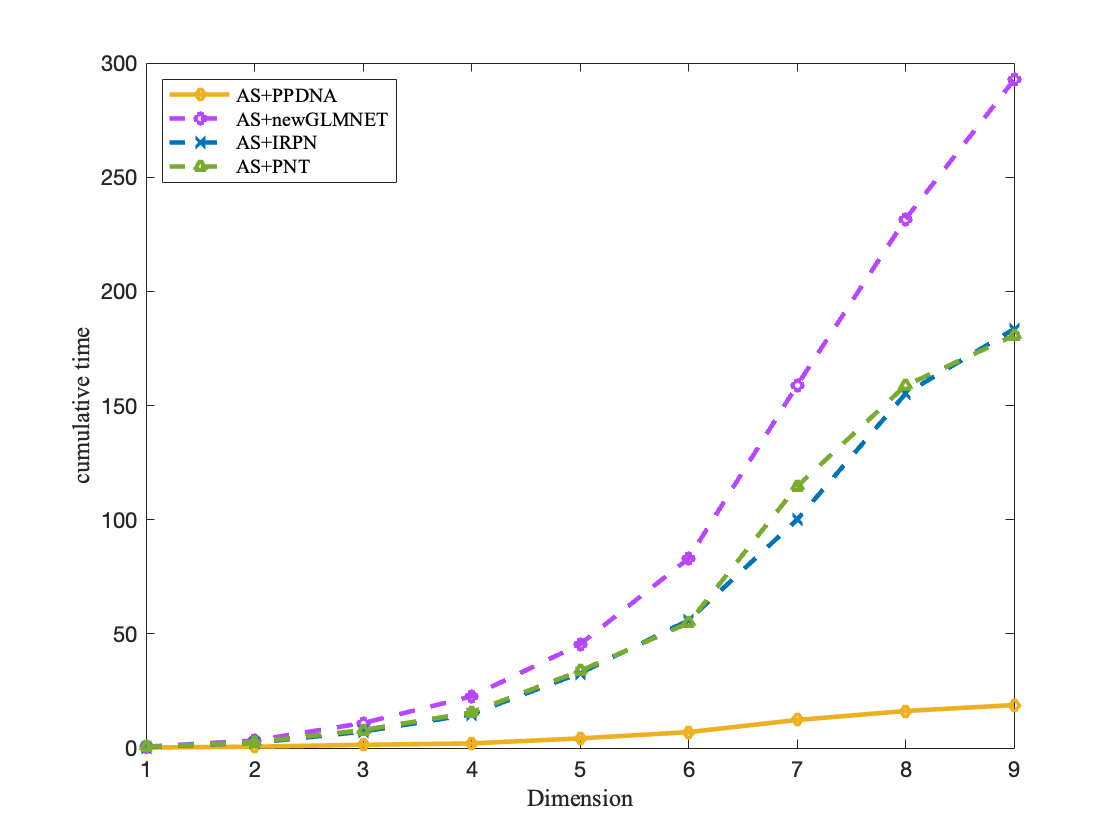}
			\caption{The total time comparison of   AS strategy with PPDNA,  AS  strategy with newGLMNET,  AS strategy with IRPN, and  AS strategy with PNT for generating solution paths of $\{0.5\hat{\lambda}_{\max},0.1\hat{\lambda}_{\max},0.05\hat{\lambda}_{\max}\}$ on  random data  when ${\rm{tol}}=10^{-6}$}
			\label{fig:2}
			\vspace{2mm}
		\end{figure}
		
		As revealed in Table \ref{tab:6} and Fig.~\ref{fig:2}, we could see that the AS strategy with the solver PPDNA significantly outperforms the strategies with the other solvers. The cumulative time of the AS strategy with PPDNA to generate  solution paths of $\{0.5\hat{\lambda}_{\max},0.1\hat{\lambda}_{\max},0.05\hat{\lambda}_{\max}\}$ is nearly $9$ times faster than that of the AS strategy with IRPN and the AS strategy with PNT, and $11$ times faster than that of the  AS strategy with newGLMNET, which means that even though the AS strategy speeds up the time of the solver to solve the problem, it cannot change the performance of the solver itself. Furthermore, we notice that the solution accuracy of the AS strategy with PPDNA is higher than other algorithms in Instance $8$ and Instance $9$. Regarding the sparsity of the solution, it could be found from this table that the number of nonzero components of the solution obtained by the AS strategy with PPDNA is one less than that of other algorithms. The reason is that the optimal solution $\bar{{\bm{w}}}$ of problem \eqref{4.2} contains the bias term $v$ and the optimal solution of $v$ is nonzero. In addition, we find that the total outer and inner iterations of  AS strategy with PPDNA are much less than those of AS strategy with other algorithms.
		
		\subsection{Numerical Results for Real Data}\label{Numerical Results for Real Data}
		\renewcommand\thefootnote{\arabic{footnote}}
		In this subsection, we compare the algorithms PPDNA, AS strategy with PPDNA, newGLMNET, IRPN and PNT for solving the $\ell_1$-regularized logistic regression problem \eqref{1.1} on real data sets $({\bm{A}},{\bm{b}})$. The six real data sets we tested are collected from LIBSVM data repository\footnote{\url{https://www.csie.ntu.edu.tw/~cjlin/libsvmtools/datasets/}}, UCI machine learning data repository\footnote{\url{https://archive.ics.uci.edu/ml/index.php}} and ELVIRA biomedical data repository\footnote{\url{http://leo.ugr.es/elvira/DBCRepository/index.html}}, including colon cancer,   leukemia, duke breast, arcene, gisette, and lungb. The statistics of all test data sets are shown in Table \ref{tab:1}, where ``nnz" and ``Des" denote the number of non-zero elements of matrix ${\bm{A}}$ and the density of  ${\bm{A}}$, respectively. We make the class label of each real data set $-1$ or $1$, and standardize the columns of the feature matrix ${\bm{A}}$ such that each column has a mean of $0$ and a variance of $1$. Similarly, we employ three values $\{0.5\hat{\lambda}_{\max},0.1\hat{\lambda}_{\max},0.05\hat{\lambda}_{\max}\}$ of the penalty parameter $\lambda$.  
		
		\begin{table}[htb]
			\centering
			\caption{Summary of tested data sets} \label{tab:1}
			\setlength{\tabcolsep}{6mm}{
				\begin{tabular}{ccccc}
					\hline	 \multirow{1}*{Data set}  &\multirow{1}*{Source}&\multirow{1}*{$m;n$}& \multicolumn{1}{c}{nnz} & \multicolumn{1}{c}{Des} \\
					\midrule 
					colonc	 & LIBSVM	 & $ 62;2000 $	 & 124000	 & 1.000\\  
					leukemia	 & LIBSVM	 & $ 38;7129 $	 & 270902	 & 1.000\\  
					duke breast	 & LIBSVM	 & $ 44;7129 $	 & 313676	 & 1.000\\  
					arcene	 & UCI	 & $ 100;10000 $	 & 991955	 & 0.992\\  
					lungb	 & ELVIRA	 & $ 181;12533 $	 & 2268470	 & 1.000\\  
					gisette	 & UCI	 & $ 1000;5000 $	 & 4714000	 & 0.943
					\\
					\hline
			\end{tabular}}
		\end{table}
		
		Table \ref{tab:2} reports the results of  the algorithms PPDNA, AS strategy with PPDNA, newGLMNET, IRPN, PNT  for solving problem \eqref{1.1} on real data when $R_{kkt} \leq 10^{-6}$. Similarly, in this table,  the  time of the AS strategy with PPDNA represents the CPU time to solve each problem  when the AS strategy generates a solution path of problem \eqref{1.1}. For the Case  \uppercase\expandafter{\romannumeral1}-\uppercase\expandafter{\romannumeral3}, we  notice that the running time of PPDNA is not much different from that of newGLMNET, IRPN, and PNT. But for the Case \uppercase\expandafter{\romannumeral4}-\uppercase\expandafter{\romannumeral6} with larger scale, the running time of PPDNA is significantly less than  other second-order algorithms. 
		Specifically, for Case \uppercase\expandafter{\romannumeral4} and \uppercase\expandafter{\romannumeral6}, PPDNA is about $3$ times faster than newGLMNET and PNT, about $2$ times faster than IRPN. From all the test results in Table \ref{tab:2}, we clearly see that combining the AS strategy  with the PPDNA can greatly improve the efficiency of solving problem \eqref{1.1}.

		\begin{center}
			\setlength{\tabcolsep}{1pt}{
				\begin{longtable}{|c|c|ccc|ccc|ccc|}
					\captionsetup{width=0.9\textwidth}
					\caption{Numerical results of PPDNA, AS strategy with PPDNA,  newGLMNET, IRPN and PNT  on real data when ${\rm{tol}}=10^{-6}$.  ``a"= PPDNA, ``b" = AS strategy with PPDNA, ``c"=newGLMNET, ``d"=IRPN, ``e"=PNT. ``4(18)" means 4 outer iterations (the total number of inner iterations is 18), and times are shown in seconds}\label{tab:2}\\
					\hline	 \multirow{2}*{Case} &\multirow{2}*{Alg}& \multicolumn{3}{c|}{$\lambda=0.5\lambda_{\max}$} & \multicolumn{3}{c|}{$\lambda=0.1\lambda_{\max}$} & \multicolumn{3}{c|}{$\lambda=0.05\lambda_{\max}$}\\ 
					& 	& \multicolumn{1}{c}{iter} &\multicolumn{1}{c}{time}& \multicolumn{1}{c|}{$R_{kkt}$} & \multicolumn{1}{c}{iter}& \multicolumn{1}{c}{time}&  \multicolumn{1}{c|}{$R_{kkt}$}& \multicolumn{1}{c}{iter}& \multicolumn{1}{c}{time}& \multicolumn{1}{c|}{$R_{kkt}$}\\ \hline
					\endfirsthead	
					\hline 
					\endfoot
					\hline	 \multirow{2}*{Case} &\multirow{2}*{Alg}& \multicolumn{3}{c|}{$\lambda=0.5\lambda_{\max}$} & \multicolumn{3}{c|}{$\lambda=0.1\lambda_{\max}$} & \multicolumn{3}{c|}{$\lambda=0.05\lambda_{\max}$}\\ 
					& 	& \multicolumn{1}{c}{iter} &\multicolumn{1}{c}{time}& \multicolumn{1}{c|}{$R_{kkt}$} & \multicolumn{1}{c}{iter}& \multicolumn{1}{c}{time}&  \multicolumn{1}{c|}{$R_{kkt}$}& \multicolumn{1}{c}{iter}& \multicolumn{1}{c}{time}& \multicolumn{1}{c|}{$R_{kkt}$}\\ \hline
					\endhead
					& $a$	 &  $4(18)$ 	 & 0.05 	 & 1.0e-07	 &  $5(27)$ 	 & 0.08 	 & 2.8e-07	 &  $4(25)$ 	 & 0.06 	 & 2.0e-07\\  
					& $b$	 &  4(18)	 & 0.02 	 & 1.0e-07	 &  5(27)	 & 0.02 	 & 2.8e-07	 &  4(24)	 & 0.02 	 & 2.2e-07\\  
					\uppercase\expandafter{\romannumeral1}	 & $c$	 &  17(42) 	 & 0.08 	 & 3.0e-07	 &  18(119) 	 & 0.12 	 & 4.2e-07	 &  19(166) 	 & 0.13 	 & 4.2e-07\\  
					& $d$	 &  4(28) 	 & 0.04 	 & 4.6e-07	 &  6(121) 	 & 0.08 	 & 2.2e-07	 &  7(173) 	 & 0.10 	 & 2.4e-07\\  
					& $e$	 &  11(36) 	 & 0.08 	 & 2.4e-07	 &  15(110) 	 & 0.10 	 & 2.6e-07	 &  14(149) 	 & 0.12 	 & 7.0e-07\\  
					\hline	 & $a$	 &  $4(16)$ 	 & 0.23 	 & 2.8e-07	 &  $4(19)$ 	 & 0.30 	 & 2.7e-07	 &  $4(20)$ 	 & 0.27 	 & 3.0e-07\\  
					& $b$	 &  4(16)	 & 0.00 	 & 2.8e-07	 &  4(19)	 & 0.01 	 & 2.7e-07	 &  4(18)	 & 0.02 	 & 3.1e-07\\  
					\uppercase\expandafter{\romannumeral2}	 & $c$	 &  15(47) 	 & 0.18 	 & 6.3e-07	 &  20(87) 	 & 0.29 	 & 3.1e-07	 &  21(130) 	 & 0.38 	 & 3.3e-07\\  
					& $d$	 &  5(41) 	 & 0.12 	 & 4.3e-08	 &  7(76) 	 & 0.19 	 & 7.3e-08	 &  7(115) 	 & 0.27 	 & 2.0e-07\\  
					& $e$	 &  13(46) 	 & 0.25 	 & 2.1e-07	 &  14(75) 	 & 0.30 	 & 6.8e-07	 &  14(120) 	 & 0.41 	 & 3.1e-07\\  
					\hline	 & $a$	 &  $3(18)$ 	 & 0.38 	 & 6.0e-07	 &  $5(25)$ 	 & 0.50 	 & 5.3e-07	 &  $5(25)$ 	 & 0.45 	 & 7.2e-07\\  
					& $b$	 &  3(18)	 & 0.00 	 & 6.0e-07	 &  5(25)	 & 0.02 	 & 5.3e-07	 &  6(26)	 & 0.01 	 & 2.6e-07\\  
					\uppercase\expandafter{\romannumeral3}	 & $c$	 &  17(47) 	 & 0.24 	 & 1.8e-07	 &  19(190) 	 & 0.55 	 & 4.5e-07	 &  20(174) 	 & 0.52 	 & 4.6e-07\\  
					& $d$	 &  5(34) 	 & 0.14 	 & 1.3e-07	 &  7(175) 	 & 0.43 	 & 2.7e-07	 &  7(192) 	 & 0.47 	 & 3.1e-07\\  
					& $e$	 &  13(39) 	 & 0.30 	 & 7.2e-07	 &  13(154) 	 & 0.51 	 & 9.0e-07	 &  14(179) 	 & 0.57 	 & 2.6e-07\\  
					\hline	 & $a$	 &  $10(31)$ 	 & 1.61 	 & 8.0e-07	 &  $16(50)$ 	 & 2.88 	 & 9.7e-07	 &  $7(32)$ 	 & 1.95 	 & 9.9e-07\\  
					& $b$	 &  11(33)	 & 0.05 	 & 6.3e-07	 &  19(55)	 & 0.22 	 & 6.2e-07	 &  10(39)	 & 0.13 	 & 6.3e-07\\  
					\uppercase\expandafter{\romannumeral4}	 & $c$	 &  18(527) 	 & 4.60 	 & 9.9e-07	 &  26(1068) 	 & 8.75 	 & 9.4e-07	 &  31(1360) 	 & 11.00 	 & 8.6e-07\\  
					& $d$	 &  8(577) 	 & 4.51 	 & 6.1e-07	 &  15(1087) 	 & 8.37 	 & 8.4e-07	 &  18(1357) 	 & 10.43 	 & 8.5e-07\\  
					& $e$	 &  10(546) 	 & 4.63 	 & 8.2e-07	 &  14(1136) 	 & 8.92 	 & 7.9e-07	 &  14(1666) 	 & 12.85 	 & 4.2e-07\\  
					\hline	 & $a$	 &  $4(18)$ 	 & 2.48 	 & 7.6e-07	 &  $4(21)$ 	 & 2.97 	 & 4.8e-07	 &  $4(20)$ 	 & 2.89 	 & 3.9e-07\\  
					& $b$	 &  5(20)	 & 0.05 	 & 7.9e-08	 &  4(21)	 & 0.09 	 & 4.8e-07	 &  4(20)	 & 0.08 	 & 3.9e-07\\  
					\uppercase\expandafter{\romannumeral5}	 & $c$	 &  15(83) 	 & 4.42 	 & 7.4e-07	 &  23(222) 	 & 8.29 	 & 2.9e-07	 &  23(184) 	 & 7.80 	 & 2.8e-07\\  
					& $d$	 &  5(85) 	 & 2.60 	 & 7.9e-08	 &  8(207) 	 & 5.20 	 & 1.8e-07	 &  9(166) 	 & 4.77 	 & 1.9e-07\\  
					& $e$	 &  13(86) 	 & 4.71 	 & 3.7e-07	 &  15(200) 	 & 7.07 	 & 4.6e-07	 &  15(160) 	 & 6.59 	 & 6.4e-07\\  
					\hline	 & $a$	 &  $4(19)$ 	 & 7.36 	 & 8.2e-08	 &  $3(19)$ 	 & 8.25 	 & 6.3e-07	 &  $3(25)$ 	 & 12.27 	 & 1.8e-07\\  
					& $b$	 &  4(19)	 & 0.44 	 & 8.2e-08	 &  3(19)	 & 2.69 	 & 6.3e-07	 &  3(25)	 & 4.84 	 & 1.8e-07\\  
					\uppercase\expandafter{\romannumeral6}	 & $c$	 &  17(37) 	 & 32.49 	 & 2.6e-07	 &  19(224) 	 & 43.57 	 & 5.5e-07	 &  20(307) 	 & 46.64 	 & 5.4e-07\\  
					& $d$	 &  4(25) 	 & 9.86 	 & 1.8e-07	 &  7(241) 	 & 24.12 	 & 3.5e-07	 &  8(322) 	 & 27.59 	 & 3.6e-07\\  
					& $e$	 &  11(30) 	 & 24.12 	 & 4.5e-07	 &  14(241) 	 & 39.19 	 & 3.6e-07	 &  15(314) 	 & 40.71 	 & 4.2e-07
			\end{longtable}}
		\end{center}

			\begin{sidewaystable}[p]
			\centering
			\caption{Numerical results of  AS strategy with PPDNA, AS strategy with newGLMNET, AS strategy with IRPN and AS  strategy with PNT on real data when ${\rm{tol}}=10^{-6}$. Case \uppercase\expandafter{\romannumeral1}-\uppercase\expandafter{\romannumeral6} represent datasets  colonc, leukemia,  duke breast, arcene, lungb and gisette, respectively. ``a"= AS strategy with PPDNA, ``b" = AS strategy with newGLMNET, ``c"=AS strategy with IRPN, ``d"=AS strategy with PNT. Times are shown in seconds} \label{tab:5}
			\begin{tabular*}{\textheight}{@{\extracolsep\fill}cccccccc}
				\toprule%
				& & \multicolumn{1}{@{}c@{}}{time}& \multicolumn{1}{@{}c@{}}{nnx} &\multicolumn{1}{@{}c@{}}{$R_{kkt}$} &\multicolumn{1}{@{}c@{}}{iOuter}&\multicolumn{1}{@{}c@{}}{iInner} &\multicolumn{1}{@{}c@{}}{iAS}   \\\cmidrule{3-3}\cmidrule{4-4}\cmidrule{5-5}\cmidrule{6-6}\cmidrule{7-7}\cmidrule{8-8}%
				Case &$\lambda$&$a|b|c|d$&$a|b|c|d$&$a|b|c|d$&$a|b|c|d$&$a|b|c|d$&$a|b|c|d$\\ \midrule
				 & 0.5	 & 0.05$|$0.06$|$0.05$|$0.05 	 & 5$|$6$|$6$|$6	 & 1.0e-07$|$2.9e-07$|$8.8e-08$|$2.6e-07	 & 4$|$34$|$9$|$22	 & 18$|$85$|$64$|$74	 & 1$|$2$|$2$|$2\\  
			\uppercase\expandafter{\romannumeral1}	 & 0.1	 & 0.09$|$0.13$|$0.09$|$0.11 	 & 25$|$26$|$26$|$26	 & 2.8e-07$|$4.4e-07$|$2.0e-07$|$2.6e-07	 & 9$|$37$|$12$|$27	 & 45$|$163$|$162$|$146	 & 2$|$2$|$2$|$2\\  
			& 0.05	 & 0.13$|$0.19$|$0.14$|$0.14 	 & 28$|$29$|$29$|$29	 & 2.2e-07$|$4.1e-07$|$2.5e-07$|$3.6e-07	 & 9$|$36$|$13$|$28	 & 51$|$280$|$284$|$260	 & 1$|$1$|$1$|$1\\  
			\hline	 & 0.5	 & 0.00$|$0.03$|$0.03$|$0.03 	 & 4$|$5$|$5$|$5	 & 2.8e-07$|$1.8e-07$|$3.2e-08$|$1.1e-07	 & 4$|$34$|$9$|$22	 & 16$|$92$|$73$|$80	 & 1$|$2$|$2$|$2\\  
			\uppercase\expandafter{\romannumeral2}	 & 0.1	 & 0.03$|$0.06$|$0.06$|$0.06 	 & 10$|$11$|$11$|$11	 & 2.7e-07$|$2.8e-07$|$1.4e-07$|$1.7e-07	 & 9$|$38$|$16$|$26	 & 39$|$214$|$230$|$202	 & 2$|$2$|$2$|$2\\  
			& 0.05	 & 0.06$|$0.11$|$0.09$|$0.11 	 & 13$|$14$|$14$|$14	 & 3.1e-07$|$3.1e-07$|$2.1e-07$|$2.3e-07	 & 8$|$45$|$14$|$27	 & 36$|$248$|$205$|$213	 & 2$|$2$|$2$|$2\\  
			\hline	 & 0.5	 & 0.00$|$0.03$|$0.03$|$0.03 	 & 10$|$11$|$11$|$11	 & 6.0e-07$|$2.6e-07$|$1.5e-07$|$2.3e-07	 & 3$|$34$|$10$|$24	 & 18$|$91$|$71$|$81	 & 1$|$2$|$2$|$2\\  
			\uppercase\expandafter{\romannumeral3}	 & 0.1	 & 0.03$|$0.09$|$0.09$|$0.08 	 & 25$|$26$|$26$|$26	 & 5.3e-07$|$4.6e-07$|$2.5e-07$|$2.6e-07	 & 9$|$37$|$13$|$27	 & 46$|$248$|$255$|$265	 & 2$|$2$|$2$|$2\\  
			& 0.05	 & 0.06$|$0.13$|$0.13$|$0.11 	 & 28$|$29$|$29$|$29	 & 2.6e-07$|$4.7e-07$|$2.9e-07$|$4.6e-07	 & 11$|$37$|$15$|$27	 & 51$|$330$|$347$|$342	 & 1$|$1$|$1$|$1\\  
			\hline	 & 0.5	 & 0.08$|$0.17$|$0.17$|$0.16 	 & 11$|$12$|$12$|$12	 & 6.3e-07$|$2.5e-07$|$2.0e-07$|$1.6e-07	 & 28$|$50$|$25$|$20	 & 77$|$2005$|$2021$|$2323	 & 2$|$2$|$2$|$2\\  
			\uppercase\expandafter{\romannumeral4}	 & 0.1	 & 0.31$|$0.95$|$0.75$|$0.78 	 & 55$|$56$|$56$|$56	 & 6.2e-07$|$5.3e-07$|$5.7e-07$|$4.1e-07	 & 25$|$48$|$23$|$26	 & 83$|$1639$|$1584$|$1703	 & 2$|$2$|$2$|$2\\  
			& 0.05	 & 0.44$|$1.70$|$1.37$|$1.44 	 & 64$|$65$|$65$|$65	 & 6.3e-07$|$5.5e-07$|$6.3e-07$|$4.0e-07	 & 29$|$57$|$34$|$28	 & 94$|$2700$|$2529$|$2917	 & 1$|$1$|$1$|$1\\  
			\hline	 & 0.5	 & 0.14$|$0.52$|$0.42$|$0.47 	 & 8$|$9$|$9$|$9	 & 7.9e-08$|$1.8e-07$|$8.1e-08$|$3.7e-07	 & 5$|$35$|$9$|$25	 & 20$|$155$|$133$|$143	 & 1$|$2$|$2$|$2\\  
			\uppercase\expandafter{\romannumeral5}	 & 0.1	 & 0.25$|$1.05$|$0.86$|$0.95 	 & 17$|$18$|$18$|$18	 & 4.8e-07$|$2.9e-07$|$1.5e-07$|$3.5e-07	 & 8$|$45$|$16$|$31	 & 41$|$384$|$389$|$363	 & 2$|$2$|$2$|$2\\  
			& 0.05	 & 0.36$|$1.58$|$1.33$|$1.42 	 & 19$|$20$|$20$|$20	 & 3.9e-07$|$2.9e-07$|$5.1e-07$|$3.1e-07	 & 8$|$48$|$18$|$34	 & 41$|$357$|$294$|$338	 & 2$|$2$|$2$|$2\\  
			\hline	 & 0.5	 & 0.50$|$7.84$|$5.42$|$6.00 	 & 3$|$4$|$4$|$4	 & 8.2e-08$|$2.6e-07$|$1.8e-07$|$4.5e-07	 & 5$|$34$|$8$|$22	 & 30$|$74$|$50$|$60	 & 2$|$3$|$3$|$3\\  
			\uppercase\expandafter{\romannumeral6}	 & 0.1	 & 3.36$|$27.96$|$15.38$|$19.35 	 & 54$|$55$|$55$|$55	 & 6.3e-07$|$5.5e-07$|$3.3e-07$|$3.6e-07	 & 10$|$55$|$20$|$45	 & 58$|$584$|$604$|$619	 & 3$|$3$|$3$|$3\\  
			& 0.05	 & 8.26$|$57.83$|$29.13$|$37.13 	 & 91$|$92$|$92$|$92	 & 1.8e-07$|$5.5e-07$|$3.3e-07$|$4.1e-07	 & 10$|$62$|$24$|$45	 & 72$|$927$|$983$|$948	 & 3$|$3$|$3$|$3
				\\
				\botrule
			\end{tabular*}
\end{sidewaystable}
				Similarly, in order to compare the performance of the four solvers combined with the AS strategy for generating the solution paths of problem \eqref{1.1}, we present the numerical results of the AS strategy in combination with the four solvers in Table \ref{tab:5}. In this table, $``{\rm{nnx}}"$ and ``time" represent the number of non-zero components of the optimal solution and the cumulative running time, respectively. We also show the results of $``{\rm{iOuter}}"$, $``{\rm{iInner}}"$, and $``{\rm{iAS}}"$, which have been defined in Section \ref{Numerical Results for Random Data}.
				Comparing the results in Table \ref{tab:2} and Table \ref{tab:5}, we observe that the solvers can effectively reduce the running time by using the adaptive sieving strategy. In the Case \uppercase\expandafter{\romannumeral5} with $\lambda=0.5\hat{\lambda}_{\max}$, the running time of the  AS  strategy with newGLMNET is about $8$ times faster than the oracle newGLMNET, and the running time of the  AS strategy with IRPN and  the AS strategy with PNT are nearly $6$ times and $10$ times faster than the oracle IRPN and the oracle PNT, respectively. We found that the $``{\rm{iOuter}}"$ and $``{\rm{iInner}}"$ of PPNDA+AS were less than those of newGLMNET+AS, IRPN+AS and PNT+AS. In addition, it is not hard to see in general, AS strategy only need sieving 2-3 times, which verifies the efficiency of the AS strategy.

		In summary, we  reasonably conclude that the PPDNA and the  AS strategy with the PPDNA are superior to other second-order algorithms for solving problem \eqref{1.1}.

		\section{Conclusions}\label{sec:5}
		In this paper, we develop an efficient  dual semismooth  Newton method based proximal point algorithm  for solving the $\ell_1$-regularized logistic regression problem. The  global and  asymptotically superlinear local convergence of the PPDNA  has been shown to hold under mild conditions. By fully exploiting the  sparse structure of the matrix, the computational cost of the {\sc Ssn} algorithm can be significantly reduced. Next, we combine the adaptive sieving strategy with the PPDNA to further improve the efficiency of solving a series of problems \eqref{1.1}. We develop this strategy to transform the problem into a smaller size problem, and then apply the PPDNA to solve it. Theoretical results verify that this  strategy can terminate in finite steps. Finally, the numerical results demonstrate the excellent performance of  the PPDNA and AS strategy compared to some state-of-the-art second-order algorithms.
		In the future work, we can focus on applying adaptive sieving strategy to convex optimization problems with fused lasso and cluster lasso or other sparse regularizers.

		\section*{Acknowledgement}
		The work of Yong-Jin Liu was in part supported by the National Natural Science Foundation of China (Grant No. 12271097) and the Key Program of National Science Foundation of Fujian Province of China (Grant No. 2023J02007).

		\section*{Declarations}
		\begin{itemize}
			\item {\small{\noindent \textbf{Conflict of interest} The authors declare that they have no conflict of interest.}}
			\item  {\small{\noindent \textbf{Data Availability} All data generated or analyzed during this study are included in this  article. The data that support the findings of this study are openly available in LIBSVM data repository (\url{https://www.csie.ntu.edu.tw/~cjlin/libsvmtools/datasets/}), UCI machine learning data repository (\url{https://archive.ics.uci.edu/ml/index.php}) and ELVIRA biomedical data repository (\url{http://leo.ugr.es/elvira/DBCRepository/index.html}).}} 
		\end{itemize}


\bibliography{sn-bibliography}

\end{document}